\providecommand{\U}[1]{\protect\rule{.1in}{.1in}}
\providecommand{\U}[1]{\protect\rule{.1in}{.1in}}
\newtheorem{theorem}{Theorem}[section]
\newtheorem{conjecture}{Conjecture}[section]
\newtheorem{corollary}{Corollary}[section]
\newtheorem{definition}{Definition}[section]
\newtheorem{example}{Example}[section]
\newtheorem{lemma}{Lemma}[section]
\newtheorem{proposition}{Proposition}[section]
\newtheorem{remark}{Remark}[section]
\newenvironment{proof}[1][Proof]{\textbf{#1.} }{\ \rule{0.5em}{0.5em}}
\begin{document}

\title{Uniform Edge Betweenness Centrality}
\author{Heather A. Newman\\Department of Mathematics\\Princeton University
\and \ \ \ \ \ \ \ \ \ \ Hector Miranda\\\ \ \ \ \ \ \ \ \ \ \ Mathematical Sciences\\\ \ \ \ \ \ \ \ \ \ \ Rochester Institute of Technology
\and Rigoberto Fl\'{o}rez\\Mathematics and Computer Science\\The Citadel
\and Darren A. Narayan\\Mathematical Sciences\\Rochester Institute of Technology}
\maketitle

\begin{abstract}
The \textit{edge betweenness centrality} of an edge is loosely defined as the
fraction of shortest paths between all pairs of vertices passing through that
edge. In this paper, we investigate graphs where the edge betweenness
centrality of edges is uniform.

It is clear that if a graph $G$ is edge-transitive (its automorphism group
acts transitively on its edges) then $G$ has uniform edge betweenness
centrality. However this sufficient condition is not necessary. Graphs that
are not edge-transitive but have uniform edge betweenness centrality appear to
be very rare. Of the over 11.9 million connected graphs on up to ten vertices,
there are only four graphs that are not edge-transitive but have uniform edge
betweenness centrality. Despite this rarity among small graphs, we present
methods for creating infinite classes of graphs with this unusual combination
of properties.

\end{abstract}

\section{Introduction}

The \textit{betweenness centrality} of a vertex $v$ is the ratio of the number
of shortest paths between two other vertices $u$ and $w$ which contain $v$ to
the total number of shortest paths between $u$ and $w$, summed over all
ordered pairs of vertices $(u,w)$. This idea was introduced by Anthonisse
\cite{Anthonisse} and Freeman \cite{F} in the context of social networks, and
has since appeared frequently in both social network and neuroscience
literature
\cite{Freeman1,Schoch,Brandes,Freeman2,Gago,GagoUB,GagoS,BS,Grassi,White}%
.\newline\qquad We first give some background with some elementary results.

\begin{definition}
The \textbf{edge betweenness centrality} of an edge $e$ in a graph $G$,
denoted $B_{G}^{\prime}(e)$ (or simply $B^{\prime}(e)$ when $G$ is clear),
measures the frequency at which $e$ appears on a shortest path between two
distinct vertices $x$ and $y$. Let $\sigma_{xy}$ be the number of shortest
paths between distinct vertices $x$ and $y$, and let $\sigma_{xy}(e)$ be the
number of shortest paths between $x$ and $y$ that contain $e$. Then
$B_{G}^{\prime}(e)={\displaystyle\sum\limits_{x,y}}\frac{\sigma_{xy}%
(e)}{\sigma_{xy}}$ (for all distinct vertices $x$ and $y$).
\end{definition}

\begin{definition}
We say a graph $G$ has \textbf{uniform edge betweenness centrality}, or is
\textbf{edge-betweenness-uniform}, if $B_{G}^{\prime}(e)$ has the same value
for all edges $e$ in $G$.
\end{definition}

We note that, for undirected graphs, shortest paths from $x$ to $y$ are
regarded as the same as shortest paths from $y$ to $x$, though the associated
contribution to the sum $B_{G}^{\prime}(e)$ is double-counted. In our first
lemma, we state an elementary result on the lower and upper bounds of the
betweenness centrality of an edge.

\begin{lemma}
For a given graph $G$ with $n$ vertices, $2\leq B_{G}^{\prime}(e)\leq
\frac{n^{2}}{4}$ for all vertices $v$ in $G$. Furthermore, these bounds are tight.
\end{lemma}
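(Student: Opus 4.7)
My plan is to establish the two bounds independently and then exhibit graphs that realize each.

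For the lower bound, the key observation is that the edge $e = uv$ is itself the unique shortest $uv$-path: since $d(u,v) = 1$, any walk of length $1$ between $u$ and $v$ must be $e$ alone. Hence $\sigma_{uv}(e)/\sigma_{uv} = 1$, and under the convention that both ordered pairs $(u,v)$ and $(v,u)$ appear in the sum, the endpoints of $e$ alone contribute $2$ to $B'_{G}(e)$; every remaining summand is nonnegative. For tightness I would point to the complete graph $K_n$: for any edge $e = uv$, every other pair of vertices has a unique length-$1$ shortest path avoiding $e$, so only the pair $\{u,v\}$ contributes and the bound is attained.

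For the upper bound I would use a distance partition. Fix $e = uv$ and set
\[
A = \{x \in V(G) : d(x,u) < d(x,v)\}, \qquad B = \{x \in V(G) : d(x,v) < d(x,u)\}.
\]
These sets are disjoint with $u \in A$, $v \in B$, so $|A| + |B| \leq n$. A shortest $xy$-path that traverses $e$ in the direction $u \to v$ forces $d(x,u) = d(x,v) - 1$ and $d(y,v) = d(y,u) - 1$, so $x \in A$ and $y \in B$; summing over the two orientations, at most $2|A||B|$ ordered pairs contribute, and each contributes at most $1$. Maximizing $|A||B|$ under the constraint $|A|+|B|\le n$ via AM--GM yields the claimed bound. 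For tightness I expect the path $P_n$ (with $e$ taken to be a central edge so that $|A|$ and $|B|$ are as balanced as possible) to realize the maximum, since on a path every pair of vertices is joined by a unique shortest path and the middle edge lies on all pairs straddling it.

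The main obstacle is the distance-partition step for the upper bound: I must justify carefully, using the triangle inequality, that any shortest path through $e$ truly has one endpoint in $A$ and the other in $B$, and in particular that a vertex $x$ with $d(x,u) = d(x,v)$ cannot be an endpoint of such a path. Once that is nailed down, the AM--GM maximization and the choice of $K_n$ and $P_n$ for tightness are routine.
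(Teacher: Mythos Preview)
Your lower bound argument coincides with the paper's, and your tightness example $K_n$ is a fine addition (the paper does not name one). For the upper bound, however, you take a genuinely different route. The paper argues informally that betweenness values are largest in acyclic graphs, then considers a cut-edge $e$ in a tree and observes that $B'(e)$ is maximized when the two components of $T-e$ each have $n/2$ vertices. Your distance partition $A=\{x:d(x,u)<d(x,v)\}$, $B=\{x:d(x,v)<d(x,u)\}$ is a cleaner and more rigorous argument that works directly for arbitrary graphs without the unproved reduction to trees, and your plan to verify via the triangle inequality that equidistant vertices can never be endpoints of a shortest path through $e$ is exactly the right check.

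One caution: under the ordered-pair convention used to obtain the lower bound of $2$, your own inequality reads $B'(e)\le 2|A|\,|B|\le 2(n/2)^2=n^2/2$, not $n^2/4$; the phrase ``yields the claimed bound'' glosses over a factor of two. (For the middle edge of $P_4$ one already has $B'(e)=8>4=n^2/4$.) The lemma as stated mixes conventions: the compatible pairs of bounds are $(1,\,n^2/4)$ for unordered pairs or $(2,\,n^2/2)$ for ordered pairs. Your argument is correct for the latter; the paper's argument has the same discrepancy but hides it behind the informal tree reduction.
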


\begin{proof}
For edge $e$ with end vertices $u$ and $v$, the shortest paths from $u$ and
$v$ and from $v$ to $u$ pass through the edge $e$. Hence $B_{G}^{\prime
}(e)\geq2$. We note that edge betweenness centrality values are larger for
graphs with no cycles, since between any pair of vertices there is a single
path between each pair of vertices. Consider a tree $T$ with $n$ vertices and
a cut-edge $e$. The highest edge betweenness centrality will occur when the
two components of $T-e$ each have $\frac{n}{2}$ vertices. Here $B_{G}^{\prime
}(e)\leq\frac{n^{2}}{4}$.
\end{proof}

Many results on edge betweenness were obtained by Gago \cite{GagoS,GagoUB},
and \cite{Gago}. However, we mention a small oversight in \cite{GagoS}. In the
second part of Lemma 4 in their paper, the following result is stated.

\begin{lemma}
[Comellas and Gago]If $C$ is a cut-set of edges, connecting two sets of
vertices $X$ and $V(G)\backslash X$ and $\left\vert X\right\vert =n_{x}$, then
${\textstyle\sum\limits_{e\in C}} B_{G}^{\prime}(e)=2n_{x}(n-n_{x})$.
\end{lemma}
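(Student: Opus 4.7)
I would unwrap the left-hand side and swap the order of summation:
$$\sum_{e \in C} B'_G(e) \;=\; \sum_{x \neq y} \frac{1}{\sigma_{xy}} \sum_{P \text{ shortest } xy\text{-path}} |E(P) \cap C|,$$
and then split the outer sum according to whether the endpoints $x, y$ lie on the same side of the cut or on opposite sides.

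For an ordered pair with $x \in X$ and $y \in V(G) \setminus X$, every $xy$-path must traverse edges of $C = E(X, V(G)\setminus X)$ an odd number of times. I would aim to prove the stronger statement that every \emph{shortest} such path crosses $C$ exactly once, on the grounds that any extra pair of crossings produces a same-side detour of length at least two that one can hope to shorten. Granting this, each of the $2 n_x(n - n_x)$ opposite-side ordered pairs contributes $\frac{1}{\sigma_{xy}} \cdot \sigma_{xy} \cdot 1 = 1$ to the total, matching the right-hand side exactly. For an ordered pair with both endpoints on the same side, the parallel claim is that no shortest path uses any edge of $C$ at all, contributing $0$.

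\textbf{Main obstacle.} The crossing-multiplicity step is precisely where the proof is delicate, and I expect it to be the ``small oversight'' the authors are about to flag: without additional hypotheses the step fails. In the path $P_{3}$ on vertices $a, c, b$ with $X = \{a, b\}$, the unique shortest $ab$-path uses both edges of $C$, and a direct check gives $\sum_{e \in C} B'_G(e) = 8$ rather than $2\cdot 2\cdot 1 = 4$. The identity becomes correct under an added hypothesis such as $d_G(u,v) = d_{G[X]}(u,v)$ for all $u, v \in X$ (and symmetrically for $V(G)\setminus X$), ensuring that geodesics between same-side vertices stay on their own side; under such an assumption the double-counting argument above closes cleanly, and the stated formula follows.
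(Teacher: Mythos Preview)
Your diagnosis is exactly the paper's point: this lemma is \emph{not} proved in the paper but is quoted from Comellas and Gago and immediately flagged as containing an oversight. The paper's treatment is precisely your ``main obstacle'' paragraph --- it observes that only the inequality $\sum_{e\in C}B_G'(e)\ge 2n_x(n-n_x)$ is guaranteed, and then gives the $P_3$ counterexample with the middle vertex as $X$ (your version takes the two leaves as $X$, which yields the same cut $C$ and the same failure). So there is nothing to compare against a ``paper's proof''; you and the paper agree that the statement is false as written.

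Two small remarks. First, your suggested repair --- requiring $d_G(u,v)=d_{G[X]}(u,v)$ on each side --- is a clean sufficient condition and goes beyond what the paper does; the paper stops at the counterexample. Second, your total of $8$ for $\sum_{e\in C}B_G'(e)$ in $P_3$ is the correct value under the paper's ordered-pair convention ($B'(uv)=B'(vw)=4$); the paper's Example reports $6$, apparently undercounting the same-side contribution by a factor of two, but this does not affect the conclusion that equality fails.
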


It is certainly true that ${\textstyle\sum\limits_{e\in C}}B_{G}^{\prime
}(e)\geq2n_{x}(n-n_{x})$ since shortest paths that have one vertex in $X$ and
one vertex in $V(G)\backslash X$ will certainly contain an edge in $C$.
However, equality may not hold, as there may be shortest paths between
vertices in the same part that use an edge in $C$. An example is given below.

\begin{example}
Let $G=P_{3}$ with vertices $u$,$v$, and $w$ and edges $uv$ and $vw$. Let
$C=\{uv,vw\}$ be a cut-set of edges. Then $X=\{v\}$ and $V(G)\backslash
X=\{u,w\}$, and $n_{x}=1$ and $n-n_{x}=2$. The contributions to $B_{G}%
^{\prime}(uv)$ and $B_{G}^{\prime}(vw)$ from paths between the two different
parts is $2(1)(2)=4$. However the contributions from paths between $u$ and $w$
are $2(1)(1)=2$. Hence $B_{G}^{\prime}(uv)+B_{G}^{\prime}(vw)=6$.
\end{example}

\subsection{Elementary Results}

In this subsection, we present a few simple results involving edge betweenness
centrality and uniform edge betweenness centrality. We first give a formula
for the edge betweenness centrality of any edge in a graph of diameter 2.

\begin{proposition}
\noindent Let $G$ be a diameter 2 graph. For any edge $e=uw$ of $G$,%

\[
B^{\prime}(e)=2+2\sum_{v_{i}\in N(w),v_{i}\notin N(u)}\frac{1}{|N(u)\cap
N(v_{i})|}+2\sum_{v_{i}\in N(u),v_{i}\notin N(w)}\frac{1}{|N(w)\cap N(v_{i}%
)|}.
\]

\end{proposition}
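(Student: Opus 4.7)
The plan is to use the diameter 2 hypothesis to drastically restrict the structure of shortest paths that can traverse $e=uw$, then sum the contributions to $B'(e)$ case by case. Since the graph has diameter $2$, every shortest path has length $1$ or $2$. If a shortest path of length $1$ passes through $e$, it must be $e$ itself, contributing to the ordered pairs $(u,w)$ and $(w,u)$. A shortest path of length $2$ that uses $e$ consists of $e$ together with one more edge adjacent to $e$, and therefore has either $u$ or $w$ as one of its endpoints; no shortest path of length $\geq 3$ can contain $e$ because the diameter forbids it.

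First I would split the defining sum $B'(e)=\sum_{x,y}\sigma_{xy}(e)/\sigma_{xy}$ into three disjoint contributions: (i) $\{x,y\}=\{u,w\}$; (ii) $\{x,y\}$ contains exactly one of $u,w$; (iii) $\{x,y\}\cap\{u,w\}=\emptyset$. Case (iii) vanishes by the diameter observation. Case (i) contributes $2$ because the unique shortest $u$-$w$ path is $e$, counted once for $(u,w)$ and once for $(w,u)$.

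Next I would analyze case (ii). Consider a pair $(u,y)$ with $y\notin\{u,w\}$. Since $G$ has diameter $2$, the edge $uw$ lies on a shortest $u$-$y$ path if and only if $d(u,y)=2$ and $w$ is a common neighbor of $u$ and $y$; equivalently $y\in N(w)\setminus N(u)$ (and $y\neq u$). In that case $\sigma_{uy}=|N(u)\cap N(y)|$, and exactly one of those shortest paths passes through $w$, so $\sigma_{uy}(e)/\sigma_{uy}=1/|N(u)\cap N(y)|$. Adding the symmetric contribution of $(y,u)$ gives $2/|N(u)\cap N(y)|$ per such $y$. The analogous analysis for pairs $(w,y)$ and $(y,w)$ with $y\in N(u)\setminus N(w)$ yields $2/|N(w)\cap N(y)|$ per such $y$. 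Summing these three contributions produces the claimed formula.

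The only subtle point, and the one I would flag as the main bookkeeping hazard, is ensuring that the indices $v_i$ in the two sums are implicitly restricted so that $v_i\neq u$ in the first sum and $v_i\neq w$ in the second (otherwise the denominators are undefined); this is automatic if one interprets $N(\cdot)$ as the open neighborhood and requires $v_i\notin\{u,w\}$, which should be stated at the start of the proof. Beyond that, the argument is a clean three-case decomposition with no further calculation required.
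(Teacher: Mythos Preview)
Your proof is correct and follows essentially the same approach as the paper's own argument: both decompose the contributions to $B'(e)$ into the length-$1$ path $uw$ (giving the constant $2$) and the length-$2$ paths $u$--$w$--$v_i$ and $w$--$u$--$v_i$, counting the total number of shortest paths via common neighbors. Your version is slightly more explicit in ruling out the case $\{x,y\}\cap\{u,w\}=\emptyset$ and in flagging the $v_i\notin\{u,w\}$ bookkeeping, but the underlying reasoning is identical.
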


\begin{proof}
\noindent Any edge $e=uw$ is on the unique shortest path from $u$ to $w$ and
from $w$ to $u$, contributing 2 to the sum. Since $G$ is of diameter 2, we now
need consider all paths of length 2 containing $e=uw$. Let $v_{i}$ be any
vertex that is distance 2 from $u$ and which is in the neighborhood of $w$,
that is, $v_{i}$ is not a neighbor of $u$. Then there is a unique path from
$u$ to $v_{i}$ containing the edge $e=uw$, namely, the path $u-w-v_{i}$. The
total number of possible paths from $u$ to $v_{i}$ is precisely given by the
number of common neighbors of $u$ and $v_{i}$. The same reasoning applies to
any vertex $v_{i}$ that is distance 2 from $w$, resulting in the above sum.
\end{proof}

\begin{corollary}
\label{11}The uniform edge betweenness centrality of a complete bipartite
graph $K_{m,n}$ is given by:
\[
2+2\left(  \frac{n-1}{m}\right)  +2\left(  \frac{m-1}{n}\right)  .
\]

\end{corollary}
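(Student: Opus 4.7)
The plan is to apply the preceding Proposition directly. First I would note that for $m, n \geq 1$ with $\max(m,n) \geq 2$, the graph $K_{m,n}$ has diameter at most $2$, so the formula from the Proposition applies. (The degenerate case $K_{1,1}$ is a single edge giving $B'(e) = 2$, which matches the stated formula after substituting $m = n = 1$.) Uniformity of the edge betweenness centrality itself is automatic because $K_{m,n}$ is edge-transitive, so it suffices to compute $B'(e)$ for one chosen edge.

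Next, I would fix an edge $e = uw$ with $u$ in the part $A$ of size $m$ and $w$ in the part $B$ of size $n$, and record the relevant neighborhoods: $N(u) = B$ and $N(w) = A$. The two index sets in the Proposition then unpack cleanly. The set $\{v_i \in N(w) : v_i \notin N(u)\}$, after removing $u$ itself (which has distance $0$, not $2$, from $u$), equals $A \setminus \{u\}$ and has $m-1$ elements; each such $v_i$ lies in $A$, so $N(u) \cap N(v_i) = B \cap B = B$, of size $n$. Similarly, $\{v_i \in N(u) : v_i \notin N(w)\} \setminus \{w\} = B \setminus \{w\}$ has $n-1$ elements, and for each such $v_i$ we have $N(w) \cap N(v_i) = A$, of size $m$.

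Plugging these counts into the Proposition's formula gives
\[
B'(e) \;=\; 2 + 2\sum_{v_i \in A \setminus \{u\}} \frac{1}{n} + 2\sum_{v_i \in B \setminus \{w\}} \frac{1}{m} \;=\; 2 + 2\left(\frac{m-1}{n}\right) + 2\left(\frac{n-1}{m}\right),
\]
which is the desired expression.

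There is essentially no hard step here; the only thing to be careful about is the bookkeeping in the two sums, in particular remembering that vertices distance $2$ from $u$ in the neighborhood of $w$ are precisely the \emph{other} vertices of $u$'s own part, and symmetrically for $w$. The mild subtlety about low-diameter exceptions ($K_{1,1}$, and verifying that $K_{1,n}$ still fits by treating the empty sum as $0$) should be checked explicitly but presents no real obstacle.
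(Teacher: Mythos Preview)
Your proof is correct and follows exactly the intended route: the paper presents this as an immediate corollary of the diameter-$2$ Proposition without writing out the details, and you have supplied those details accurately. Your care in excluding $u$ and $w$ themselves from the respective index sets is the only point requiring attention, and you handled it correctly.
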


\noindent For even $n$, the uniform edge betweenness centrality of $K_{n}$
minus a perfect matching is given by:
\[
2+\frac{4}{n-2}.
\]
\newline Corollary \ref{11} shows that uniform edge betweenness values are
unbounded, as seen by fixing $n=1$. Corollary 1.2 shows that there are uniform
edge betweenness values infinitely close to the lower bound of 2. We also have
that $B_{G}^{\prime}(e)=2$ if and only if $e$ is a component of $G$.

\section{Edge Transitivity and Uniform Edge Betweenness Centrality}

We recall the definition of vertex (edge) transitivity. A graph is
vertex-transitive (edge-transitive) if its automorphism group acts
transitively on its vertex (edge) set. That is, a graph is vertex-transitive
(edge-transitive) if its vertices (edges) cannot be distinguished from each
other. We will make use of the following alternative definition
\cite{Andersen}.

\begin{theorem}
[Andersen, Ding, Sabidussi, and Vestergaard]A finite simple graph $G$ is
edge-transitive if and only if $G-e_{1}\cong G-e_{2}$ for all pairs of edges
$e_{1}$ and $e_{2}$.
\end{theorem}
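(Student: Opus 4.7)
The proof splits into two directions. The forward direction is immediate: if $G$ is edge-transitive and $\alpha \in \mathrm{Aut}(G)$ satisfies $\alpha(e_1) = e_2$, then viewing $\alpha$ as a permutation of $V(G)$, it carries every edge of $G - e_1$ to an edge of $G - e_2$ and every non-edge to a non-edge, so its restriction is an isomorphism $G - e_1 \to G - e_2$. I would dispose of this in a single sentence.

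For the converse I would fix edges $e_1 = u_1 v_1$ and $e_2 = u_2 v_2$ of $G$ and let $\psi \colon G - e_1 \to G - e_2$ be an isomorphism supplied by the hypothesis. Since both graphs share the common vertex set $V(G)$, I treat $\psi$ as a permutation of $V(G)$ and ask whether it can be upgraded to an automorphism of $G$ sending $e_1$ to $e_2$. The easy case is $\{\psi(u_1), \psi(v_1)\} = \{u_2, v_2\}$: then $\psi$ sends $E(G) = E(G - e_1) \cup \{e_1\}$ bijectively onto $E(G) = E(G - e_2) \cup \{e_2\}$, so $\psi$ is itself the desired automorphism.

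The substantive case is $\{\psi(u_1), \psi(v_1)\} \ne \{u_2, v_2\}$. Setting $f := \{\psi(u_1), \psi(v_1)\}$, since $\{u_1, v_1\}$ is a non-edge of $G - e_1$, its image $f$ is a non-edge of $G - e_2$ distinct from $e_2$, hence a non-edge of $G$. Viewed on the full edge set, $\psi$ then establishes an isomorphism $G \cong G - e_2 + f$: removing the edge $e_2$ and inserting the missing pair $f$ yields a graph isomorphic to $G$. The crux of the theorem is to convert this ``swap-isomorphism'' into a genuine automorphism of $G$ carrying $e_1$ to $e_2$.

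I expect this conversion to be the main obstacle. The strategy I would pursue is iterative: because $G \cong G - e_2 + f$, the swapped graph inherits the hypothesis, producing further swap-isomorphisms that can be composed with $\psi$. Each step interchanges a single edge with a single non-edge, and the finiteness of the pair $\binom{V(G)}{2}$ forces some composition to return every edge other than $e_1, e_2$ to its original position while mapping $\{u_1, v_1\}$ onto $\{u_2, v_2\}$, yielding the required automorphism. Verifying that the sequence of swaps can be chosen to terminate with precisely this endpoint matching is the delicate combinatorial bookkeeping at the heart of the argument, and I would follow the detailed treatment in \cite{Andersen}.
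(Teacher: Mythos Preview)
The paper does not supply its own proof of this theorem: it is quoted as a result of Andersen, Ding, Sabidussi, and Vestergaard with a citation to \cite{Andersen}, and is used only as a black-box alternative characterisation of edge-transitivity. There is therefore no in-paper argument to compare against.

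As for your sketch itself: the forward direction and the ``easy case'' $\psi(\{u_1,v_1\})=\{u_2,v_2\}$ of the converse are handled correctly. The remaining case, however, is where the actual content of the Andersen--Ding--Sabidussi--Vestergaard theorem lies, and your proposal does not really discharge it. You correctly observe that $\psi$ yields $G\cong G-e_2+f$ with $f$ a non-edge of $G$, but the subsequent claim that iterating such swap-isomorphisms must, by finiteness alone, eventually produce a permutation fixing every edge except $e_1,e_2$ and matching their endpoints is not justified: finiteness guarantees that some composite returns to a previously seen labelled graph, not that it returns to $G$ itself with $e_1$ carried to $e_2$. Controlling where the sequence of swapped pairs lands is exactly the nontrivial part, and you acknowledge this by deferring to \cite{Andersen} for the ``delicate combinatorial bookkeeping.'' In effect your proposal, like the paper, ultimately relies on the original source for the substantive direction; what you have added beyond the paper is a correct reduction to the single hard case, together with a heuristic for that case that would need the cited reference to be made rigorous.
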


\begin{remark}
Clearly if a graph is edge-transitive then it has uniform edge betweenness
centrality. However, the converse is not true.
\end{remark}

We used the databases from Brendan McKay \cite{McKay} and the Wolfram
Mathematica 11 code for betweenness testing and found that of the over 11.9
million graphs on ten vertices or less, there are only four graphs that have
uniform edge betweenness centrality but are not edge-transitive (see Figure 1).%

\begin{center}
\includegraphics[
natheight=0.965100in,
natwidth=3.937500in,
height=0.998in,
width=3.9859in
]%
{OW9UCI04.wmf}%
\\
Figure 1. The four graphs on ten vertices or less that have uniform edge
betweenness centrality but are not edge transitive.
\end{center}

Surprisingly, none of these four graphs are vertex-transitive. We will show
that these properties can arise in a special class of vertex-transitive graphs.

\begin{definition}
A circulant graph $C_{n}(L)$ is a graph on vertices $v_{1},v_{2},...,v_{n}$
where each $v_{i}$ is adjacent to $v_{(i+j)(\operatorname{mod}n)}$ and
$v_{(i-j)(\operatorname{mod}n)}$ for each $j$ in a list $L$. Algebraically,
circulant graphs are Cayley graphs on finite cyclic groups. For a list $L$
containing $m$ items, we refer to $C_{n}(L)$ as an $m$-circulant.
\end{definition}

We note that a circulant graph has rotational symmetry about its vertices and
is therefore vertex-transitive. We will show later that $C_{15}(1,6)$ is not
edge-transitive, but has uniform edge betweenness centrality and this example
can be extended to an infinite class.

We again used the databases from Brendan McKay \cite{McKay} and the Wolfram
Mathematica 11 code for betweenness testing and found that $C_{15}(1,6)$ is
the smallest vertex-transitive graph that has uniform edge betweenness
centrality but is not edge-transitive. For the sake of completeness the
details are given below and in the appendix of this paper.

As noted above none of the four graphs in Figure 1 are vertex-transitive. In
Propositions 2.1-2.5 we next check graphs on 11-15 vertices to identify graphs
that are vertex-transitive, have uniform edge betweenness centrality, but are
not edge-transitive.

\begin{proposition}
There are no edge-betweenness-uniform graphs on 11 vertices that are
vertex-transitive but not edge-transitive.
\end{proposition}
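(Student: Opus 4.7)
The plan is to reduce the claim to a small, explicit case check using the classification of vertex-transitive graphs on a prime number of vertices. Since $11$ is prime, a theorem of Turner (1967) asserts that every vertex-transitive graph on $11$ vertices is a circulant $C_{11}(L)$ for some $L \subseteq \{1,2,3,4,5\}$; the point is that any transitive subgroup of $S_{11}$ must contain an $11$-cycle, so the graph is a Cayley graph on $\mathbb{Z}/11$. Two such circulants are isomorphic whenever their connection sets are related by multiplication by a unit in $(\mathbb{Z}/11)^{*}$, and this action on $\{1,\dots,5\}$ (after identifying $j$ with $-j$) is generated by the $5$-cycle $j \mapsto 2j$. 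Enumerating its orbits on subsets shows that, up to isomorphism, the connected vertex-transitive graphs on $11$ vertices are exactly
\[
C_{11},\quad C_{11}(1,2),\quad C_{11}(1,3),\quad C_{11}(1,2,3),\quad C_{11}(1,2,4),\quad C_{11}(1,2,3,4),\quad K_{11}.
\]
The first and last are manifestly edge-transitive, so they are irrelevant to the statement.

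For each of the remaining five circulants, I would first verify non-edge-transitivity. For a circulant on a prime number of vertices, the stabilizer of vertex $0$ in the full automorphism group is exactly the multiplier subgroup $\{a \in (\mathbb{Z}/11)^{*} : aL = L\}$, and a direct check shows that in each of these five cases this subgroup is just $\{\pm 1\}$. Consequently the edges $\{0,j\}$ with $j \in L$ partition into $|L|$ distinct orbits under the automorphism group, so the graph is not edge-transitive.

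To finish, I would show for each of the five graphs that the betweenness values on these edge-orbits are not all equal, so the graph fails to be edge-betweenness-uniform. Four of them, namely $C_{11}(1,3)$, $C_{11}(1,2,3)$, $C_{11}(1,2,4)$, and $C_{11}(1,2,3,4)$, have diameter $2$, so Proposition~1.1 reduces the betweenness of $\{0,j\}$ to a short sum of reciprocals $1/|N(0)\cap N(v)|$ over distance-$2$ vertices $v$ adjacent to one endpoint; these sums are easy to evaluate directly in the cyclic group. For the remaining case $C_{11}(1,2)$, which has diameter $3$, I would enumerate shortest paths from $0$ to each vertex by hand (there are only two vertices at distance $3$ and four at distance $2$), using the cyclic symmetry to determine how many such paths traverse an edge of each jump-length at $0$. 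The main obstacle is the arithmetic bookkeeping across the five graphs and their roughly dozen edge-orbits; no single computation is deep, but the volume is enough that a computer-algebra confirmation (as the authors note was performed in Mathematica) is a natural sanity check.
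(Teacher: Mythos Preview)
Your approach is essentially the same as the paper's: enumerate the seven connected vertex-transitive graphs on $11$ vertices, set aside $C_{11}$ and $K_{11}$ as edge-transitive, and verify that the remaining five are not edge-betweenness-uniform. Your list coincides with the paper's (their $\overline{C_{11}}$, $\overline{C_{11}(1,3)}$, $\overline{C_{11}(1,2)}$ are your $C_{11}(1,2,3,4)$, $C_{11}(1,2,3)$, $C_{11}(1,2,4)$ after a multiplier change); the only difference is that you supply the theoretical scaffolding (Turner's theorem, the multiplier-subgroup argument for non-edge-transitivity, and the reduction via Proposition~1.1 in the diameter-$2$ cases) where the paper simply reports the outcomes of a computer check.
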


\begin{proof}
There are seven vertex-transitive graphs with 11 vertices.

\begin{enumerate}
\item $C_{11}$ (edge-transitive)

\item $\overline{C_{11}}$ (two different centrality values)

\item $K_{11}$ (edge-transitive)

\item $C_{11}(1,3)$ (two different centrality values)

\item $C_{11}(1,2)$ (two different centrality values)

\item $\overline{C_{11}(1,3)}$ (three different centrality values)

\item $\overline{C_{11}(1,2)}$ (three different centrality values)
\end{enumerate}

These cover all seven cases.
\end{proof}

\begin{proposition}
There are no edge-betweenness-uniform graphs on 12 vertices that are
vertex-transitive but not edge-transitive.
\end{proposition}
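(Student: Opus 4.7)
The plan is to mirror the case analysis used in Proposition 2.1: enumerate every vertex-transitive graph on 12 vertices, discard the ones that are edge-transitive (which automatically satisfy uniformity and so are not counterexamples), and then verify for each remaining graph that its edge betweenness centrality assumes at least two distinct values. I would pull the complete list of vertex-transitive graphs of order 12 from Brendan McKay's database \cite{McKay}, together with the Mathematica betweenness routine already used in the paper, so that the check for each candidate is mechanical.

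First I would partition the list according to how each graph arises, to keep the presentation organized and easy to verify. The families to enumerate include $C_{12}$ and $K_{12}$ (both trivially edge-transitive), the remaining circulants $C_{12}(L)$ for the various admissible connection sets $L \subseteq \{1,2,3,4,5,6\}$, their complements, the standard vertex-transitive graphs on 12 vertices that are not circulants (for instance the truncated tetrahedron, the generalized Petersen graph $GP(6,2)$, and any other Cayley graphs on groups of order 12 that are not circulant), and finally complements of these. For each graph in the list I would either cite a known edge-transitivity classification or apply Theorem 2.1 to eliminate it from further consideration.

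Next, for every vertex-transitive but not edge-transitive graph on the list, I would exhibit two specific edges whose betweenness centralities differ, presenting the computation in the same enumerated style as Proposition 2.1 (a bulleted list, each item annotated as "edge-transitive" or "$k$ different centrality values"). In many cases, the existence of two edge orbits gives an immediate qualitative reason why the centralities must differ: one can pick representatives from different orbits and compare the counts of shortest paths, typically via the diameter-2 formula in Proposition 1.1 when the diameter permits, or by direct shortest-path enumeration otherwise.

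The main obstacle will simply be completeness and bookkeeping: 12 is the first order in this sequence divisible by several small primes, so the number of vertex-transitive graphs is substantially larger than the seven that appeared for 11 vertices, and one must be careful not to miss any non-Cayley or "exceptional" vertex-transitive graph. I would guard against this by cross-checking the count against McKay's tabulation before doing the centrality computation, and by confirming that the edge-orbit data from Mathematica matches the two-orbit (or more) structure predicted by Theorem 2.1 whenever a graph fails edge-transitivity. Once the list is verified exhaustive and every non-edge-transitive member is shown to have a nonuniform centrality vector, the proposition follows.
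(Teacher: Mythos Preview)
Your proposal is correct and follows essentially the same approach as the paper: exhaustively enumerate the vertex-transitive graphs on 12 vertices from McKay's database, set aside those that are edge-transitive, and verify computationally that each of the remaining graphs fails to be edge-betweenness-uniform. The paper's own proof is in fact more terse---it simply records that there are 64 vertex-transitive graphs on 12 vertices, 11 of them edge-transitive and the other 53 not edge-betweenness-uniform, with the full list deferred to an appendix---so your plan to organize the enumeration by families and to exhibit explicit witness edges goes somewhat beyond what the paper presents, but the underlying method is identical.
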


\begin{proof}
There are 64 vertex-transitive graphs on 12 vertices, 11 of which are both
edge-transitive and vertex-transitive and the remaining 53 of which are not
edge-betweenness-uniform. (See the appendix for all 64 vertex-transitive
graphs on 12 vertices.)
\end{proof}

\begin{proposition}
There are no edge-betweenness-uniform graphs on 13 vertices that are
vertex-transitive but not edge-transitive.
\end{proposition}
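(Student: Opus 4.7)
The plan is to enumerate all vertex-transitive graphs on 13 vertices and check each one, exactly as was done for 11 and 12 vertices in the preceding propositions. Because 13 is prime, a classical theorem of Turner asserts that every vertex-transitive graph on $p$ vertices (with $p$ prime) is isomorphic to a circulant $C_{13}(L)$ for some connection list $L \subseteq \{1,2,3,4,5,6\}$. This reduces the problem to at most $2^6 = 64$ connection sets, which after identifying multiplier isomorphisms (the natural action of $(\mathbb{Z}/13)^{\times}$ on connection sets) collapses to a modest list of non-isomorphic candidates.

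For each candidate $C_{13}(L)$ I would first discard those that are already edge-transitive; the highly symmetric cases such as $C_{13}$, $\overline{C_{13}}$, and $K_{13}$, together with any $L$ that is a union of orbits of a multiplier subgroup, tend to yield edge-transitive graphs. For the surviving circulants I would compute $B'(e)$ representatively. The rotational automorphism $v_i \mapsto v_{i+1}$ already equates all edges that share the same ``jump'' $j \in L$, so there are at most $|L|$ distinct centrality values, one per $j$. Uniform edge betweenness then reduces to verifying that these $|L|$ numbers coincide; any discrepancy produces a witness of non-uniformity and eliminates that circulant.

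The main obstacle is not conceptual but computational: for each surviving circulant one must compute all pairwise shortest-path distances and count shortest paths through a representative edge of each jump class. This is precisely the routine performed by the Mathematica code cited earlier in the paper, so the proof reduces to running that code on the short list of non-edge-transitive circulants on 13 vertices and exhibiting two distinct centrality values in each case. As with Proposition 2.2, the resulting enumeration and the per-graph centrality data would be deferred to the appendix, and the conclusion would simply report that no circulant on 13 vertices is simultaneously non-edge-transitive and edge-betweenness-uniform.
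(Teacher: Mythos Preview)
Your proposal is correct and follows essentially the same approach as the paper: enumerate the vertex-transitive graphs on 13 vertices, discard the edge-transitive ones, and verify by direct computation that each of the remaining graphs has at least two distinct edge-betweenness values. The only minor difference is that you justify the enumeration via Turner's theorem (all vertex-transitive graphs on a prime number of vertices are circulants), whereas the paper simply pulls the list of 13 vertex-transitive graphs from a database and reports that 4 are edge-transitive and the other 9 fail uniformity, deferring the case-by-case data to the appendix.
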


\begin{proof}
There are 13 vertex-transitive graphs on 13 vertices, 4 of which are both
edge-transitive and vertex-transitive and the remaining 9 of which are not
edge-betweenness-uniform. (See the appendix for all 13 vertex-transitive
graphs on 13 vertices.)
\end{proof}

\begin{proposition}
There are no edge-betweenness-uniform graphs on 14 vertices that are
vertex-transitive but not edge-transitive.
\end{proposition}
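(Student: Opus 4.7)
The plan mirrors the strategy used in the proofs of Propositions 2.2 and 2.3: enumerate all vertex-transitive graphs on $14$ vertices, separate those that are already edge-transitive (which are automatically edge-betweenness-uniform by Remark 2.1) from those that are not, and then show by direct computation that each of the latter fails to have uniform edge betweenness centrality.

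First, I would extract the list of vertex-transitive graphs on $14$ vertices from Brendan McKay's online database \cite{McKay}. Because $14 = 2 \cdot 7$ admits only the cyclic group $\mathbb{Z}_{14}$, the dihedral group $D_{7}$, and the direct product $\mathbb{Z}_{7} \times \mathbb{Z}_{2}$ as possible regular subgroups of the automorphism group, the resulting graphs split into a handful of well-understood families, including the circulants $C_{14}(L)$ and their complements, and Cayley graphs on $D_{7}$. For each graph in the list I would first test the criterion $G - e_{1} \cong G - e_{2}$ from Theorem 2.1 across all pairs of edges; those that pass are edge-transitive and can be dispatched immediately.

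For every remaining vertex-transitive but non-edge-transitive graph, I would compute $B_{G}^{\prime}(e)$ for each edge $e$ directly using the Wolfram Mathematica routine already employed in Propositions 2.2 and 2.3. Because vertex-transitivity ensures every vertex looks the same, one only needs to sample enough edges to hit a representative of each edge-orbit; in practice it suffices to exhibit two edges $e_{1}, e_{2}$ with $B_{G}^{\prime}(e_{1}) \neq B_{G}^{\prime}(e_{2})$. When the graph has diameter $2$ (as many small circulants do), Proposition 1.1 replaces the global shortest-path computation with a local neighborhood count.

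The main obstacle is bookkeeping rather than theory: ensuring the enumeration of vertex-transitive graphs on $14$ vertices is complete and verifying that every non-edge-transitive candidate actually fails uniformity. I would record, for each graph, either the witness to edge-transitivity or an explicit pair of edges with differing $B_{G}^{\prime}$ values, and collect the full case analysis in the appendix in parallel with the treatment for $n = 12$ and $n = 13$.
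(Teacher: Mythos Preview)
Your proposal is correct and follows exactly the paper's approach: the paper's proof is the one-line statement that there are $51$ vertex-transitive graphs on $14$ vertices, $6$ of which are edge-transitive and the remaining $45$ of which are verified computationally to be non-edge-betweenness-uniform, with the full case list deferred to the appendix. (One small slip in your write-up: $\mathbb{Z}_{14}\cong\mathbb{Z}_{7}\times\mathbb{Z}_{2}$, so there are only two groups of order $14$, but since you take McKay's database as the authoritative enumeration this does not affect the argument.)
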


\begin{proof}
There are 51 vertex-transitive graphs on 14 vertices, 6 of which are both
edge-transitive and vertex-transitive and the remaining 45 of which are not
edge-betweenness-uniform. (See the appendix for all 51 vertex-transitive
graphs on 14 vertices.)
\end{proof}

\begin{proposition}
$C_{15}(1,6)$ is the only graph on 15 vertices which is
edge-betweenness-uniform and vertex-transitive and not edge-transitive. More
specifically, $C_{15}(1,6)$ is the smallest graph satisfying these three conditions.
\end{proposition}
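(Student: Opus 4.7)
The plan is to mirror the approach of Propositions 2.1--2.4: enumerate the vertex-transitive graphs on $15$ vertices via McKay's database \cite{McKay}, discard every graph that is either edge-transitive or not edge-betweenness-uniform, and verify that the unique survivor is $C_{15}(1,6)$. The ``smallest'' clause then follows immediately by combining this with Propositions 2.1--2.4 together with the already-noted observation that none of the four graphs on at most ten vertices in Figure 1 is vertex-transitive.

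The substantive work is to verify directly that $C_{15}(1,6)$ has the three claimed properties. Vertex-transitivity is immediate, since the translations $x \mapsto x+c$ on $\mathbb{Z}_{15}$ already act transitively on the vertex set. For the failure of edge-transitivity, observe that translations split the edge set into two orbits, the ``short'' edges $\{i, i+1\}$ and the ``long'' edges $\{i, i+6\}$, and one must argue that no further automorphism fuses these two orbits. The cleanest route is to exhibit a local invariant distinguishing the two edge types, for example the number of copies of some fixed small subgraph through the edge, or to compute the automorphism group of the circulant explicitly (using that automorphisms of a Cayley graph on $\mathbb{Z}_n$ must preserve the connection set up to a suitable ``multiplier'') and check that it preserves the partition into short and long edges.

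The delicate step, and the main obstacle, is to compute $B'(e)$ for one short edge and one long edge and to verify that the two values agree. A short calculation shows that the distance classes from vertex $0$ are $\{0\}$, $\{1,6,9,14\}$, $\{2,3,5,7,8,10,12,13\}$, and $\{4,11\}$, so $C_{15}(1,6)$ has diameter $3$. For each fixed edge $e$, vertex-transitivity allows one to place one endpoint of $e$ at $0$ and enumerate ordered pairs $(x,y)$ by distance, computing $\sigma_{xy}$ and $\sigma_{xy}(e)$ in each case. The bookkeeping is nontrivial because shortest paths of length $2$ and $3$ may split in several ways, but the sum collapses, by the Cayley structure, into a small number of congruence-class contributions which can be tabulated by hand and cross-checked by the computer search. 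Once the two totals match, the proof of the main assertion is complete, and the ``smallest'' clause follows from Propositions 2.1--2.4 as above.
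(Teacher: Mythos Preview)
Your proposal is correct and follows essentially the same computational-enumeration approach as the paper: sift the vertex-transitive graphs on $15$ vertices via McKay's database, discard those that are edge-transitive or fail to be edge-betweenness-uniform, and observe that only $C_{15}(1,6)$ survives, with the ``smallest'' clause inherited from Propositions 2.1--2.4 and the remark about Figure~1. The paper's own proof is in fact terser---it simply reports the counts ($44$ vertex-transitive, $10$ edge-transitive, $33$ not edge-betweenness-uniform) and defers the case list to the appendix, without the hand verification you sketch; the non-edge-transitivity and uniform betweenness of $C_{15}(1,6)$ are established separately later (Theorem~2.2 and the explicit remark that $B'(e)=13$).
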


\begin{proof}
There are 44 vertex-transitive graphs on 15 vertices, 10 of which are both
edge-transitive and vertex-transitive and 33 of which are not
edge-betweenness-uniform. The remaining graph is $C_{15}(1,6)$, which is
edge-betweenness-uniform and vertex-transitive, but not edge-transitive. (See
the appendix for all 44 vertex-transitive graphs on 15 vertices.)
\end{proof}

\noindent\newline\indent The only two properties of graphs identified in the
literature thus far that individually imply uniform edge betweenness
centrality are edge transitivity and distance regularity \cite{GagoS2}. Recall
that non-edge-transitive, edge-betweenness-uniform graphs appear to be very
rare. However, we now introduce two infinite classes of graphs that we claim
are edge-betweenness-uniform but neither edge-transitive nor distance-regular.
Specifically, these classes are found among 2-circulants, and thus are
vertex-transitive:
\[
\mbox{Class 1: }C_{18n-3}(1,6n),\mbox{}n\in\mathbb{N}%
\]%
\[
\mbox{Class 2: }C_{18n+3}(1,6n),\mbox{}n\in\mathbb{N}%
\]

\indent We determine that graphs in these classes are neither edge-transitive
nor distance-regular. Not only is this information noteworthy on its own, for
graphs that are edge-betweeenness-uniform but not edge-transitive are rare,
but it also means that we must find a novel heuristic for demonstrating the
uniform edge betweenness centrality of these graphs. Moreover, this is
particularly challenging because counting shortest paths is a nontrivial
problem. Ultimately, our method will \textit{not} require making explicit
shortest paths calculations in demonstrating uniform edge betweenness centrality.

\begin{itemize}
\item We arrange the vertices in graphs in Classes 1 and 2 in a circle, and
label them counterclockwise and consecutively beginning with the label $1$.
The labels are viewed in modulo $18n-3$ for graphs in Class 1 and modulo
$18n+3$ for graphs in Class 2.

\item We refer to edges that connect consecutive vertices as outer chords or
chords of length $1$.

\item We refer to edges that connect vertices that are $6n$ apart as inner
chords or chords of length $6n$.
\end{itemize}

%

\begin{center}
\includegraphics[
natheight=4.347400in,
natwidth=4.111300in,
height=2.8392in,
width=2.6878in
]%
{OW9UCI05.wmf}%
\\
Figure 2. $C_{15}(1,6)$ with outer chords of length 1 (green) and an inner
chords of length 6 (blue)
\end{center}

In reference to 2-circulants, we note for every pair of inner chords, there
exists an automorphism mapping one to another, and for every pair of outer
chords, there exists an automorphism mapping one to another (this is clear by
defining the automorphism as a rotation). Thus, the edges of the graph split
up into at most $2$ orbits. In fact, for all graphs in Classes 1 and 2, the
edges split up into exactly $2$ orbits, since we now show that graphs in these
classes are \textbf{not} edge-transitive (edges are all part of the same
orbit).\medskip\newline Consider a 2-circulant graph $C_{k}(a,b)$. Define
$\lambda_{k}(a,b)$ to be the unique nonnegative integer satisfying
$gcd(k,a)b\equiv\lambda_{k}(a,b)a$ (mod $k$) and let $\Lambda_{k}%
(a,b)=\frac{\lambda_{k}(a,b)}{gcd(k,b)}$.

\begin{lemma}
[Nicoloso and Pietropaoli]\label{NP} Let $C_{k}(a,b)$ and $C_{k}(a^{\prime
},b^{\prime})$ be two (connected) circulants. Without loss of generality,
assume $gcd(k,a)\leq gcd(k,b)$ and $gcd(k,a^{\prime})\leq gcd(k,b^{\prime})$.
Then $C_{k}(a,b)\cong C_{k}(a^{\prime},b^{\prime})$ if and only if one of the
following two conditions holds:

\begin{enumerate}
\item $gcd(k,a)=gcd(k,a^{\prime}) < gcd(k,b) = gcd(k,b^{\prime})$ and
$\Lambda_{k}(a,b) = \Lambda_{k}(a^{\prime},b^{\prime})$

\item $gcd(k,a)=gcd(k,a^{\prime}) = gcd(k,b) = gcd(k,b^{\prime})$ and either
$\Lambda_{k}(a,b)$=$\Lambda_{k}(a^{\prime},b^{\prime})$ or $\Lambda_{k}%
(a,b)$=$\Lambda_{k}(b^{\prime},a^{\prime})$
\end{enumerate}
\end{lemma}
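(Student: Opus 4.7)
The plan is to reduce the classification of 2-circulant isomorphisms to a number-theoretic statement about the natural action of translations and of the unit group $\mathbb{Z}_k^{\times}$ on $\mathbb{Z}_k$. I would view $C_k(a,b)$ as the Cayley graph of $\mathbb{Z}_k$ with connection set $\{\pm a,\pm b\}$, so that translations already supply a transitive group of automorphisms. Any candidate isomorphism $\phi: C_k(a,b) \to C_k(a',b')$ can therefore, after precomposition with a translation, be assumed to fix $0$, which puts strong constraints on the possible images of the generating set.

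First I would prove necessity. The edges of length $a$ in $C_k(a,b)$ form a disjoint union of $\gcd(k,a)$ cycles of length $k/\gcd(k,a)$, and similarly for $b$. Any isomorphism must preserve the multiset of cycle lengths of these monochromatic subgraphs, so when $\gcd(k,a)\neq\gcd(k,b)$ the $a$-cycles must go to $a'$-cycles and the $b$-cycles to $b'$-cycles, forcing $\gcd(k,a)=\gcd(k,a')$ and $\gcd(k,b)=\gcd(k,b')$; when the two gcds coincide, the color classes may be swapped, which accounts for the second disjunction in case (2). The finer invariant $\Lambda_k(a,b)$ records how a single $b$-step displaces the $a$-cycle through $0$: iterating $\gcd(k,a)$ successive $b$-steps first lands back on this cycle, and the resulting offset along the cycle is exactly $\lambda_k(a,b)\cdot a$. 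Because an isomorphism that respects the $a$-cycle decomposition can only rescale this offset by a unit that also permutes the $b$-cycles, the quotient $\Lambda_k(a,b)=\lambda_k(a,b)/\gcd(k,b)$ is a genuine isomorphism invariant, and preservation of this quotient translates directly into the stated $\Lambda$-equalities.

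For sufficiency I would construct $\phi$ explicitly as multiplication by a carefully chosen unit $m\in\mathbb{Z}_k^{\times}$, possibly followed by the swap $a\leftrightarrow b$ in case (2). Equal gcds on each color guarantee that a unit exists sending the family of $a$-cycles bijectively to the family of $a'$-cycles; the assumption that the $\Lambda$ invariants agree promotes this cycle-level bijection to an edge-preserving bijection on $b$-edges as well. Concretely, solving the congruence $\gcd(k,a)\,b \equiv \lambda_k(a,b)\,a \pmod{k}$ pins down $m$ up to the rotations of individual $a$-cycles, and the compatibility forced by matching $\Lambda$'s ensures that the remaining freedom can be used to achieve the connection-set identity $m\{\pm a,\pm b\}=\{\pm a',\pm b'\}$.

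The principal obstacle will be sufficiency in case (2), where equal gcds on both colors yield additional automorphisms of the cycle structure and one must verify that either $\Lambda_k(a,b)=\Lambda_k(a',b')$ or $\Lambda_k(a,b)=\Lambda_k(b',a')$ already suffices on its own. The swap introduces a nontrivial reciprocal identity between $\lambda_k(a,b)$ and $\lambda_k(b,a)$ modulo $k/\gcd(k,a)$ that must be disentangled to produce an explicit multiplier; this is where the argument, like other partial confirmations of \'{A}d\'{a}m's conjecture, runs into the genuine subtlety of the problem rather than a routine calculation, and I would expect to handle it by splitting on the parity of $\lambda_k$ modulo $\gcd(k,b)$.
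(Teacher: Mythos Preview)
The paper does not prove this lemma at all: it is quoted verbatim from Nicoloso and Pietropaoli \cite{Nicoloso} as an external tool and is immediately applied in the next lemma to rule out isomorphisms of $C_{18n\pm3}(1,6n)$ with other $C_k(1,b')$. There is therefore nothing in the present paper to compare your proposal against.

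As a standalone plan your outline is in the right spirit --- viewing a 2-circulant as a Cayley graph on $\mathbb{Z}_k$, normalizing so that $0$ is fixed, and then trying to show that any isomorphism is realized by a multiplier $m\in\mathbb{Z}_k^{\times}$ is exactly the \'{A}d\'{a}m-conjecture framework in which Nicoloso and Pietropaoli work. But what you have written is a strategy, not a proof: the key assertion that $\Lambda_k(a,b)$ is an isomorphism invariant is asserted rather than demonstrated (you would need to show that \emph{every} isomorphism, not just multipliers, respects the decorated cycle structure in the way you describe), and you yourself flag the sufficiency direction in case (2) as unresolved, proposing only a vague parity split. Since \'{A}d\'{a}m's conjecture is false in general and the 2-generator case required a dedicated paper, the passage from ``cycle-level bijection'' to ``edge-preserving bijection on $b$-edges'' is precisely where the real work lies, and your sketch does not yet supply it.
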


\begin{lemma}
Let $k=18n\pm3$ and let $b=6n$, $n\in\mathbb{N}$. Then the circulant graph
$C_{k}(1,b)$ is not isomorphic to any circulant of the form $C_{k}%
(1,b^{\prime})$ for $b^{\prime}\leq\frac{k}{2}$, $b^{\prime}\neq b$.
\end{lemma}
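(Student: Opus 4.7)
The plan is to apply Lemma \ref{NP} directly to the pair $C_k(1,b)$ and $C_k(1,b')$. The first step is to compute $\gcd(k,b)$: writing $k = 18n \pm 3 = 3(6n \pm 1)$ and observing that $\gcd(6n\pm1,6n)=1$ (consecutive integers), $\gcd(3,6n)=3$, and $3 \nmid (6n\pm1)$, one gets $\gcd(k,b) = 3$. In particular $\gcd(k,1) = 1 < 3 = \gcd(k,b)$, so the WLOG hypothesis of Lemma \ref{NP} is automatically satisfied on the $C_k(1,b)$ side (and on the $C_k(1,b')$ side as well, since $\gcd(k,1)=1 \leq \gcd(k,b')$).

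Next I would rule out Case 2 of the lemma: it requires $\gcd(k,1) = \gcd(k,b)$, i.e.\ $1=3$, which is absurd. So any isomorphism $C_k(1,b) \cong C_k(1,b')$ must fall under Case 1, which forces $\gcd(k,b') = \gcd(k,b) = 3$ together with $\Lambda_k(1,b') = \Lambda_k(1,b)$.

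The final step is to evaluate the $\Lambda$-values explicitly. Since $\gcd(k,1)=1$, the defining congruence for $\lambda_k(1,c)$ reduces to $\lambda_k(1,c) \equiv c \pmod{k}$, and since we are working with $0 < b, b' < k$ (we are given $b' \leq k/2$), this yields $\lambda_k(1,b) = b$ and $\lambda_k(1,b') = b'$. Combined with the previous paragraph, $\Lambda_k(1,b) = b/3 = 2n$ and $\Lambda_k(1,b') = b'/3$. Equating the two forces $b' = 6n = b$, contradicting $b' \neq b$ and completing the argument.

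The whole argument is essentially a direct substitution into Nicoloso--Pietropaoli once the key gcd computation $\gcd(k,b)=3$ is in hand. The potential pitfalls are (i) verifying $\lambda_k(1,c) = c$ in the relevant range, and (ii) confirming that Case 2 is genuinely inapplicable; I do not expect either to be a substantive obstacle beyond careful bookkeeping.
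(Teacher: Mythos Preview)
Your proposal is correct and follows essentially the same approach as the paper: apply Lemma~\ref{NP} directly, compute $\lambda_k(1,b)=b$ and $\Lambda_k(1,b)=b/3$, and conclude that the isomorphism conditions force $b'=b$. Your version simply makes explicit what the paper leaves implicit, namely the computation $\gcd(k,b)=3$ and the reason Case~2 cannot apply.
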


\begin{proof}%
\[
gcd(k,1)b\equiv\lambda_{k}(1,b)\mbox{ (mod  }k)
\]%
\[
\implies\lambda_{k}(1,b)=b
\]%
\[
\implies\Lambda_{k}(1,b)=\frac{b}{3}%
\]
In order for one of the conditions in Lemma \ref{NP} to be satisfied, we must
have $b=b^{\prime}$.
\end{proof}

\begin{lemma}
[Wilson and Poto\u{c}nik]If $G$ is a tetravalent edge-transitive circulant
graph with $k$ vertices, then either:

\begin{enumerate}
\item $G$ is isomorphic to $C_{k}(1,b)$ for some $b$ such that $b^{2}
\equiv\pm1$ (mod $k$), or

\item $k$ is even, $k=2m$, and $G$ is isomorphic to $C_{2m}(1,m+1)$.
\end{enumerate}
\end{lemma}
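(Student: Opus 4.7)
The plan is to analyze when a tetravalent circulant $C_{k}(a,b)$ admits an automorphism swapping its two natural orbits of edges. Under the cyclic group of rotations, the edges already split into at most two orbits -- the length-$a$ chords and the length-$b$ chords -- so edge-transitivity is equivalent to the existence of some further graph automorphism carrying a length-$a$ chord to a length-$b$ chord.

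My first move is to normalize. When $\gcd(a,k)=1$, the multiplier $x\mapsto a^{-1}x\pmod{k}$ is an isomorphism $C_{k}(a,b)\to C_{k}(1,a^{-1}b)$, and Lemma \ref{NP} keeps track of which isomorphism class one lands in. Thus in the generic case one may assume $a=1$. When neither $a$ nor $b$ is coprime to $k$ (which is possible even for connected circulants, e.g.\ $C_{6}(2,3)$), the condition $\gcd(a,b,k)=1$ still forces strong divisibility constraints on $k,a,b$; a small separate analysis handles these and either reduces them to the normalized form or produces the exceptional family of case~2.

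With $G=C_{k}(1,b)$ in hand, the multiplier $\mu_{b}\colon x\mapsto bx\pmod{k}$ is a graph automorphism iff $b\cdot\{\pm1,\pm b\}\subseteq\{\pm1,\pm b\}\pmod{k}$, which forces $b^{2}\in\{\pm1,\pm b\}\pmod{k}$. The degenerate case $b^{2}\equiv\pm b$ makes $\mu_{b}$ stabilize the length-$b$ orbit and so does not contribute to edge-transitivity. The useful case $b^{2}\equiv\pm1\pmod{k}$ makes $\mu_{b}$ swap the two rotation-orbits of edges; combined with the rotations this produces an edge-transitive action, yielding case~1.

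The hard part -- and the main obstacle -- is the converse: showing that if $C_{k}(1,b)$ is edge-transitive then either $b^{2}\equiv\pm1\pmod{k}$ or $(k,b)$ falls into the exceptional pattern $(2m,m+1)$ of case~2. Since circulant graphs are not always CI-graphs, there can be automorphisms beyond the multipliers and rotations, and one cannot simply declare every edge-orbit-swapping automorphism to be a multiplier. To close the argument I would invoke the Schur-ring / Muzychuk-style classification of automorphism groups of circulants to enumerate the possible ``sporadic'' automorphisms that permute the connection set nontrivially, and then carry out a case analysis at valence four to verify that the only tetravalent family producing such a sporadic edge-orbit swap without simultaneously satisfying $b^{2}\equiv\pm1$ is $C_{2m}(1,m+1)$. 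For this family the swap can be exhibited directly as a specific involution that pairs up parallel chords of length $1$ with chords of length $m+1$; ruling out all other potential families is where the real work of the argument lies.
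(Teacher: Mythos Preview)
The paper does not prove this lemma at all: it is quoted verbatim as a result of Wilson and Poto\u{c}nik (reference \cite{Wilson} in the bibliography) and used as a black box in the subsequent Theorem~2.2. So there is no ``paper's own proof'' to compare your proposal against.

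As for the substance of your sketch: the easy direction (multipliers $\mu_b$ with $b^2\equiv\pm1$ give edge-transitivity, and $C_{2m}(1,m+1)$ has an explicit orbit-swapping involution) is fine. But your handling of the converse is not really a proof --- you correctly identify that non-CI circulants may have automorphisms outside the affine group $\{x\mapsto cx+d\}$, and then you say you would ``invoke the Schur-ring / Muzychuk-style classification'' and ``carry out a case analysis at valence four.'' That is precisely the content of the Wilson--Poto\u{c}nik result you are trying to prove; gesturing at the machinery is not the same as executing the case analysis. If you actually want to supply a proof here rather than a citation, you would need to either (i) work through the Schur-ring enumeration explicitly for connection sets of size four, or (ii) use the Kov\'acs--Li classification of arc-transitive and edge-transitive circulants directly. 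Either route is substantial and well beyond what the present paper attempts; the authors were right to simply cite it.
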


\begin{theorem}
Circulant graphs of the form $C_{18n \pm3}(1,6n)$ are \textit{not} edge-transitive.
\end{theorem}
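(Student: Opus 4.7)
Since each graph $C_{18n\pm3}(1,6n)$ is tetravalent, the plan is to apply the Wilson--Poto\u{c}nik classification stated in the preceding lemma and rule out both of its cases. Case 2 of that lemma requires $k$ to be even, which is impossible here because $k = 18n \pm 3$ is odd; so only Case 1 needs real work.

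For Case 1, I would suppose for contradiction that $C_k(1, 6n) \cong C_k(1, b)$ for some $b$ satisfying $b^2 \equiv \pm 1 \pmod{k}$. Since $C_k(1, b) = C_k(1, k - b)$, I may take $b \leq k/2$, and one checks that $6n < (18n\pm3)/2$ for $n \geq 1$ so the target $6n$ is already in this range. Then Lemma 2.3 pins everything down: among all 2-circulants $C_k(1, b')$ with $b' \leq k/2$, ours is isomorphic only to $C_k(1, 6n)$, so in fact $b = 6n$. The problem therefore reduces to checking whether $(6n)^2 \equiv \pm 1 \pmod{18n \pm 3}$.

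The remaining step is a short modular computation: writing $36n^2 = 2n(18n-3) + 6n$ gives $(6n)^2 \equiv 6n \pmod{18n-3}$, and writing $36n^2 = 2n(18n+3) - 6n$ gives $(6n)^2 \equiv -6n \pmod{18n+3}$. Since $n \geq 1$ forces $1 < 6n < k - 1$, the residue $\pm 6n$ lies strictly between $1$ and $k-1$ and is distinct from both endpoints, so it is neither $1$ nor $-1$ modulo $k$. This contradicts the condition $b^2 \equiv \pm 1 \pmod k$, completing the argument.

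I do not anticipate a serious obstacle: the two content ingredients (Lemma 2.3 and the Wilson--Poto\u{c}nik classification) do all of the heavy lifting, and the one spot deserving a moment's care is the reduction in Case 1 to $b \leq k/2$ so that Lemma 2.3 applies cleanly. Once that normalization is made explicit, the final modular arithmetic is entirely elementary.
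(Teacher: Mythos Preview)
Your proposal is correct and follows essentially the same route as the paper: invoke the Wilson--Poto\u{c}nik classification to reduce to checking $b^2\not\equiv\pm1\pmod{k}$, and use the isomorphism rigidity lemma to force $b=6n$. Two small remarks: the rigidity result you cite is Lemma~2.2 in the paper's numbering (not 2.3), and your normalization to $b\le k/2$ is in fact slightly cleaner than the paper's version, which checks both $b^2$ and $(k-b)^2$ separately even though $(k-b)^2\equiv b^2\pmod{k}$ makes the second check redundant.
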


\begin{proof}
By Lemmas 2.2 and 2.3, it suffices to show, letting $k=18n \pm3$ and $b=6n$,
that $b^{2} \not \equiv \pm1$ (mod $k$) and $(k-b)^{2} \not \equiv \pm1$ (mod
$k$). This is easily seen by polynomial long division.
\end{proof}

\noindent\newline We also verify that graphs of the form $C_{18n \pm3}(1,6n)$
are \textit{not} distance-regular by appealing to the following theorem.

\begin{theorem}
[Miklavi\u{c} and Poto\u{c}nik]A Cayley graph of a cyclic group (a circulant)
is distance-regular if and only if it is isomorphic to a cycle, or a complete
graph, or a complete multipartite graph, or a complete bipartite graph on a
twice an odd number of vertices with a matching removed, or the Paley graph
with a prime number of vertices.
\end{theorem}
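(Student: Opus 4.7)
The plan is to prove both directions of the biconditional. For the sufficiency direction, I would verify case-by-case that each listed family is a distance-regular Cayley graph of a cyclic group: the cycle $C_n = \mathrm{Cay}(\mathbb{Z}_n,\{\pm 1\})$ is distance-regular of diameter $\lfloor n/2\rfloor$; the complete graph $K_n$ is trivially distance-regular; the complete multipartite graph with $m$ equal parts of size $r$ arises as $\mathrm{Cay}(\mathbb{Z}_{mr}, \mathbb{Z}_{mr}\setminus m\mathbb{Z}_{mr})$ and is distance-regular of diameter $2$; the Paley graph on $p$ vertices for prime $p\equiv 1\pmod 4$ is the Cayley graph of $\mathbb{Z}_p$ with connection set the nonzero quadratic residues and is strongly regular; and $K_{n,n}$ minus a perfect matching with $n$ odd is a circulant on $2n$ vertices that is distance-regular of diameter $3$ with a well-known intersection array.

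For the necessity direction, suppose $G = \mathrm{Cay}(\mathbb{Z}_n, S)$ is connected and distance-regular of diameter $d$. The central tool is that the spectrum of a circulant consists of the character sums $\lambda_j = \sum_{s\in S} \zeta_n^{js}$ for $0\le j<n$, where $\zeta_n = e^{2\pi i/n}$, and distance-regularity forces $G$ to have exactly $d+1$ distinct eigenvalues. I would proceed by induction on $d$. The case $d=1$ yields $K_n$ immediately. For $d=2$, $G$ is strongly regular, and I would invoke the classification of strongly regular circulants to conclude that $G$ is either a complete multipartite graph with equal parts or a Paley graph of prime order --- the latter because the multiplier subgroup fixing the connection set must coincide with the set of quadratic residues, which forces $n$ to be prime.

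For $d\ge 3$, the remaining work is to show that $G$ is a cycle or $K_{n,n}$ minus a perfect matching with $n$ odd. I would split on bipartiteness. In the non-bipartite case, analyze the distance-$2$ graph $G_2$, which is itself a circulant and inherits strong regularity from the intersection array of $G$; combining the resulting spectral constraints with the fact that each $\lambda_j$ is an algebraic integer in $\mathbb{Z}[\zeta_n]$ forces valency $k=2$ and hence $G = C_n$. In the bipartite case, consider the halved graph, which is a distance-regular circulant of strictly smaller diameter; apply the inductive hypothesis to identify it, and lift back to conclude $G$ is $K_{n,n}$ minus a perfect matching with $n$ odd (the parity condition arising from the requirement that $S = -S$ generate $\mathbb{Z}_{2n}$).

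The main obstacle is the $d \ge 3$ case. Ruling out all distance-regular circulants of valency $\ge 3$ and diameter $\ge 3$ except the bipartite matching-removed examples is the core of the Miklavi\v{c}--Poto\v{c}nik argument; it requires delicate use of the Krein conditions, the integrality of standard multiplicities, and the arithmetic of cyclotomic character sums over $\mathbb{Z}[\zeta_n]$, and I expect this step to dominate the proof effort.
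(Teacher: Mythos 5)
This statement is not proved in the paper at all: it is quoted, with attribution, from \u{S}.~Miklavi\u{c} and P.~Poto\u{c}nik, \emph{Distance-regular circulants}, European J. Combin. \textbf{24} (2003), 777--784, and is used only as a black box to deduce that $C_{18n\pm3}(1,6n)$ is not distance-regular. So there is no in-paper argument to compare yours against; the relevant question is whether your proposal actually constitutes a proof of the cited classification.

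It does not, because the step you yourself identify as ``the core of the Miklavi\v{c}--Poto\v{c}nik argument'' --- showing that a distance-regular circulant of valency $k\geq 3$ and diameter $d\geq 3$ must be $K_{m,m}$ minus a perfect matching with $m$ odd --- is only gestured at. Saying that it ``requires delicate use of the Krein conditions, the integrality of standard multiplicities, and the arithmetic of cyclotomic character sums'' names a toolbox rather than an argument; no contradiction is actually derived for the non-bipartite case, and in the bipartite case the induction on the halved graph is not set up (you do not verify that the halved graph is antipodal-quotient-compatible, identify which of the five families it can land in, or explain how the lift forces the matching-removed structure and the parity of $m$). In addition, your diameter-$2$ case silently invokes ``the classification of strongly regular circulants'' (essentially the Bridges--Mena / Kelly theorem), which is itself a substantial result that would need to be proved or cited; as written, the hard direction of the theorem rests on two unproven classification statements. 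The easy direction (checking that each of the five families is a distance-regular circulant) is fine, but for the purposes of this paper the correct move is simply to cite Miklavi\u{c} and Poto\u{c}nik, as the authors do.
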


\begin{corollary}
Circulant graphs of the form $G=C_{18n\pm3}(1,6n)$, $n\in\mathbb{N}$ are not distance-regular.
\end{corollary}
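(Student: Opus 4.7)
The plan is to invoke Theorem 2.3 of Miklavi\u{c} and Poto\u{c}nik, which asserts that any distance-regular circulant must be either a cycle, a complete graph, a complete multipartite graph, the graph $K_{m,m}$ minus a perfect matching on $2m$ vertices with $m$ odd, or a Paley graph on a prime number of vertices. Accordingly, the strategy is simply to rule out each of these five possibilities for $G = C_{18n\pm 3}(1,6n)$. The only two invariants I intend to use throughout are that $G$ has $k = 18n\pm 3 \geq 15$ vertices (for $n \in \mathbb{N}$) and, since neither $6n \equiv \pm 1 \pmod{k}$ nor $12n \equiv 0 \pmod{k}$, $G$ is a tetravalent ($4$-regular) graph.

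The first two cases fall immediately to a degree count: a cycle is $2$-regular, and $K_k$ is $(k-1)$-regular with $k-1 \geq 14$, so $G$ can be neither. For the Paley graph case, a Paley graph on a prime $p \equiv 1 \pmod{4}$ vertices has degree $(p-1)/2$, so tetravalence forces $p = 9$, which is not prime. For the ``complete bipartite minus a matching'' case, the graph on $2m$ vertices ($m$ odd) has degree $m-1$, so $m=5$ and the graph has $10$ vertices, whereas $k = 18n \pm 3 \neq 10$.

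The one case requiring a small extra observation is the complete multipartite case. If $G \cong K_{m_1, \dots, m_r}$, vertex-transitivity (which $G$ inherits by being a circulant) forces all parts to have the same size $m_1 = \cdots = m_r = m$, with $rm = k$ and $r \geq 2$ (else $G$ would have no edges). Then $m \leq k/2$, so the common degree $k - m$ is at least $k/2$; setting this equal to $4$ gives $k \leq 8$, contradicting $k \geq 15$.

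Since $G$ fails to be any of the five types listed in Theorem 2.3, it is not distance-regular. The only mild obstacle is the complete multipartite step, which leans on the vertex-transitivity-forces-balanced-parts observation; every other case is eliminated by comparing the degree of $G$ to that of the candidate family.
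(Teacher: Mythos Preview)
Your proof is correct and follows the same overall strategy as the paper: invoke the Miklavi\u{c}--Poto\u{c}nik classification and eliminate each of the five candidate families. The only differences are in the bookkeeping used to rule out individual cases---the paper dismisses the Paley and bipartite-minus-matching cases via the order of $G$ (composite, respectively odd), while you use degree counts throughout; for the complete multipartite case the paper argues $s(t-1)=4$ with $st=18n\pm3$ odd forces $s=1$, $t=5$, whereas you bound $k-m\geq k/2$. Your appeal to vertex-transitivity to balance the parts is harmless but unnecessary, since distance-regularity already implies regularity, which forces equal part sizes in a complete multipartite graph.
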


\begin{proof}
It is clear that $G$ is none of the following: a cycle, complete graph, Paley
graph with a prime number of vertices (the order of $G$ is not prime),
complete bipartite graph on twice an odd number of vertices with a matching
removed (the order of $G$ is not even). It is also easily seen that $G$ is not
a complete multipartite graph of the form $K_{s\times t}$. For, $K_{s\times
t}$ has order $st$ and degree sum equal to $s^{2}t(t-1)$. This forces
$st=18n\pm3$ and since $G$ is 4-regular, $s(t-1)=4$, which is impossible.
\end{proof}

\bigskip

We have established that graphs of the form $C_{18n \pm3}(1,6n)$ are neither
edge-transitive nor distance-regular, so we must develop a novel way of
demonstrating their uniform edge betweenness centrality. Importantly, the
method we will introduce does not involve making explicit centrality
calculations by means of counting shortest paths, and will provide insight
into the specific conditions that allow these graphs to be
edge-betweenness-uniform in the absence of the strong condition of edge transitivity.

\indent \newline We first focus on graphs of the form $C_{18n-3}(1,6n)$, and
then easily extend the method to graphs of the form $C_{18n+3}(1,6n)$. We note
two useful facts that we will exploit throughout the proofs:

\begin{enumerate}
\item Circulant graphs are vertex-transitive.

\item To demonstrate the uniform edge betweenness centrality of 2-circulants,
it suffices to show that an inner chord has the same edge betweenness
centrality as an outer chord (since the inner chords occupy the same orbit and
the outer chords occupy the same orbit).
\end{enumerate}

\noindent For $G=C_{15}(1,6)$ $(n=1)$, one can show explicitly that
$B^{\prime}(e)=13$ for all edges $e$ in $G$. We now consider all $n \geq2$.

\begin{lemma}
\label{sixedges}Let $G=C_{18n-3}(1,6n)$ for $n\geq2$ and fix any vertex $s$.
Let $a=(3n-1)(6n)$. Then the only edges that do not lie on a shortest path
originating at $s$ are
$E=\{(s+a,s+a+1),(s-a,s-a-1),(s-a,s+a),(s+a+1,s+a+1-6n),(s-a-1,s-a-1+6n),(s+a+1-6n,s-a-1+6n)\}$%
.
\end{lemma}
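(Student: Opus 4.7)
The plan is to identify the edges failing to lie on any shortest path from $s$ as precisely those whose endpoints are equidistant from $s$: this equivalence holds because an edge between adjacent BFS layers always extends some shortest path, while an edge within a single layer cannot. I would then compute $d_G(s, s + i)$ for every $i$ explicitly (with $k = 18n - 3$) and match the resulting six level edges against the set $E$ in the statement.

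First, a useful reduction: since $a = (3n-1)(6n) = 18n^2 - 6n = nk + 3n - 6n$, we have $a \equiv -3n \pmod{k}$, so the six vertex endpoints in $E$ become $s \pm 3n,\, s \pm (3n-1),\, s \pm (9n-1)$ (using $s - (9n-1) \equiv s + (9n-2)$). For the distance formula, any walk in $G$ from $s$ to $s + i$ using $|p|$ signed inner-chord steps and $|q|$ signed outer-chord steps has length $|p| + |q|$ and reaches its destination iff $6np + q \equiv i \pmod{k}$; hence
\[
d_G(s, s + i) \;=\; \min\{\,|p| + |q| : 6np + q \equiv i \pmod{k}\,\}.
\]
The critical simplification comes from $18n \equiv 3 \pmod{k}$: the substitution $(p, q) \mapsto (p - 3, q + 3)$ preserves the congruence, and an elementary comparison of $|p| + |q|$ before and after this shift shows the minimum may always be attained with $|p| \leq 2$.

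A case analysis on $p \in \{-2,-1,0,1,2\}$, combined with the symmetry $d_G(s, s+i) = d_G(s, s-i)$, then yields the piecewise-linear formula
\[
d_G(s, s+i) \;=\; \begin{cases}
i, & 0 \leq i \leq 3n - 1, \\
6n - 1 - i, & 3n \leq i \leq 6n - 3, \\
|i - 6n| + 1, & 6n - 2 \leq i \leq 9n - 2, \\
3n - 1, & i = 9n - 1,
\end{cases}
\]
valid for $i \in [0, 9n-1]$. The eccentricity of $s$ is therefore $3n - 1$, and the set of vertices attaining it is exactly $\{s + (3n-1),\, s + 3n,\, s + (9n-2),\, s + (9n-1),\, s - 3n,\, s - (3n-1)\}$, matching the six vertex endpoints in $E$.

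It remains to locate the level edges. Differencing the formula (and extending by symmetry) shows $d_G(s, s+i) = d_G(s, s+i+1)$ only at $i \in \{3n-1,\, 9n-2,\, 15n-3\}$, yielding three outer level edges, and $d_G(s, s+i) = d_G(s, s+i+6n)$ only at the same three values of $i$, yielding three inner level edges. Translating these six edges back into the notation of the statement via $a \equiv -3n$ recovers exactly the list $E$. The main obstacle is the case-by-case verification of the distance formula, especially at the four boundary values $i \in \{6n-2, 6n-1, 6n, 6n+1\}$ (where the optimal $p$ transitions between $-2$ and $+1$) and the isolated point $i = 9n - 1$ (where $p = -1$ becomes optimal); one must also confirm that the reduction to $|p| \leq 2$ applies uniformly and that no representation with larger $|p|$ ever strictly beats the optimum found.
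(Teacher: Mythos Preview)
Your approach is correct and genuinely different from the paper's. The paper proves Lemma~\ref{sixedges} by formally running a breadth-first search from $s$: it writes out the BFS pseudocode, exhibits the subgraph of marked edges after three layers (via a figure), and then lists, for each subsequent layer $4\le k\le 3n-1$, the twelve new edges that get marked; the six edges of $E$ are simply those never touched by this enumeration. There is no distance formula and no algebraic reduction---the argument is an explicit trace of the algorithm, supported by two pictures.

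Your route is cleaner and more quantitative. The observation that an edge misses every shortest $s$-path iff its endpoints lie in the same BFS layer reduces the lemma to pure distance computation, and the key modular identity $3\cdot 6n\equiv 3\pmod{18n-3}$ lets you collapse the minimisation $\min\{|p|+|q|:6np+q\equiv i\}$ to five values of $p$, yielding a closed-form piecewise-linear $d_G(s,s+i)$. This buys you an explicit eccentricity ($3n-1$), an explicit description of the antipodal vertices, and a mechanical way to locate level edges by differencing---none of which the paper's BFS trace provides. The paper's approach, in turn, is more visual and makes the shortest-path \emph{structure} (not just the distances) transparent, which is arguably closer in spirit to the automorphism construction in Lemma~\ref{25} that follows. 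Both are valid; yours generalises more readily and avoids the somewhat informal ``continuing the algorithm until it terminates, we find\ldots'' step in the paper.
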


\begin{proof}
To find all edges involved in a shortest path originating at vertex $s$, we
use a breadth-first search algorithm. In the algorithm, we mark an edge as
visited if it is part of a shortest path originating at $s$. \noindent\newline

\begin{enumerate}
\item Initialize a boolean-valued array markedEdges() indexed by the edges
$E(G)$ of $G$, so that markedEdges($e$) = false for all $e \in E(G)$.

\item Initialize a boolean-valued array markedVertices() indexed by the
vertices $V(G)$ of $G$, so that markedVertices($v$) = false for all $v \in
V(G)$.

\item Initialize an integer-valued array distTo() indexed by the vertices
$V(G)$ of G, so that distTo$(v)=-1$ for all $v \in V(G)$

\item Initialize a first-in/first-out queue $q$ with operations enqueue and
dequeue, with $q$.enqueue($v$) being the operation that adds $v$ to the queue,
and $q$.dequeue() being the operation that returns the least recently added
item to $q$ and removes it from $q$.
\end{enumerate}

\indent Set markedVertices($s$)=true \newline\indent Set distTo($s$)=0
\newline\indent $q$.enqueue($s$) \newline\indent while $q$ is not empty
\newline\indent \indent  Set $v$ = $q$.dequeue() \newline\indent \indent  for
each $w \in N(v)$ \newline\indent \indent \indent if markedVertices($w$) =
false \newline\indent \indent \indent \indent Set markedVertices($w$) = true
\newline\indent \indent \indent \indent Set markedEdges($(v,w)$) = true
\newline\indent \indent \indent \indent Set distTo($w$)= distTo($v$)+1
\newline\indent \indent \indent \indent $q$.enqueue($w$) \newline%
\indent \indent \indent if distTo($w$) = distTo($v$)+1 \newline%
\indent \indent \indent \indent Set markedEdges($(v,w)$) = true

\noindent\newline Performing the algorithm up to and including marking all
vertices that are distance 3 away from $s$, we find that the marked edges form
the subgraph of $G$ in Figure 3.%

\begin{center}
\includegraphics[
natheight=2.722400in,
natwidth=4.090600in,
height=2.7648in,
width=4.1399in
]%
{OW9UCI06.wmf}%
\\
Figure 3. The subgraph of $G$ consisting of all edges marked by the BFS up to
and including marking all vertices that are distance $3$ away from $s$. The
numbers indicate the order in which vertices were added to the queue $q$, with
$0$ corresponding to the first number added and $20$ corresponding to the most
recent number added.
\end{center}

\noindent\newline Continuing the algorithm until it terminates, we find that
for each $4\leq k\leq3n-1$, the following 12 edges are marked, with all the
same distance from the starting vertex $s$: \noindent\newline\newline%
$(s+(6n)(k-1),s+(6n)(k))$, \newline$(s+(6n)(k),s+(6n)(k)-1)$,\newline%
$(s+(6n)(k-1),s+(6n)(k-1)+1)$, \newline$(s+(6n)(k)+1,s+(6n)(k+1)+1)$,
\newline$(s+(6n)(k+1)+1,s+(6n)(k-2)+1)$, \newline$(s-(6n)(k-1),s-(6n)(k))$,
\newline$(s-(6n)(k),s-(6n)(k)+1)$, \newline$(s-(6n)(k-1),s-(6n)(k-1)-1)$,
\newline$(s-(6n)(k)+1,s-(6n)(k+1)-1)$, \newline$(s-(6n)(k+1)-1,s-(6n)(k-2)-1)$%
, \newline$(s-(6n)(k-2)-1,s-(6n)(k-1)-1)$, and \newline%
$(s+(6n)(k-2)+1,s+6n(k-1)+1)$.

\noindent\newline It follows that the only edges that are not marked by the
algorithm are those in the set $E$ given in the statement of the lemma (see
Figure 4).%

\begin{center}
\includegraphics[
natheight=5.014200in,
natwidth=6.784500in,
height=3.9271in,
width=5.3048in
]%
{OW9UCI07.wmf}%
\\
Figure 4. A generalized depiction of the algorithm performed on $G$, with the
6 edges that remain colored black being the edges that are not marked by the
algorithm (and hence not occupying any shortest path originating at $s$).
\end{center}

\end{proof}

\begin{lemma}
\label{25}Let $G=C_{18n-3}(1,6n)$ for $n\geq2$ and without loss of generality
fix a source vertex $s=2+(6n)(3n-1)$ mod $(18n-3)$ and consider any vertex
$a$. Let $H=G-E$, where $E$ is the set of edges that do not lie on any
shortest path originating at $s$ (Lemma \ref{sixedges}). Then there exists an
an automorphism $\phi$ of $H$ mapping $e_{1}=(a,a+1)$ to $e_{2}=(a,a+6n)$,
given by:
\[
{\small \phi(v)=%
\begin{array}
[c]{cc}%
\Bigg\{ &
\begin{array}
[c]{cc}%
a-(6n)(a-v) & a-a\operatorname{mod}(6n-1)+2\leq v\leq a-a\operatorname{mod}%
(6n-1)+6n\\
\phi(v+1)-1 & v=a-a\operatorname{mod}(6n-1)+1\\
\phi(v-1)+1 & v=a-a\operatorname{mod}(6n-1)+6n+1\\
\phi(v-6n)+1 & a-a\operatorname{mod}(6n-1)+2+6n\leq v\leq
a-a\operatorname{mod}(6n-1)+12n-1\\
\phi(v+6n)-1 & a-a\operatorname{mod}(6n-1)+3-6n\leq v\leq
a-a\operatorname{mod}(6n-1)\\
&
\end{array}
\end{array}
}%
\]
where vertices ($v$ and $\phi(v)$) are taken mod $(18n-3)$.
\end{lemma}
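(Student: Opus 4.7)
The plan is to verify that the piecewise map $\phi$ is a bijection of $V(H) = \mathbb{Z}/(18n-3)$, that it sends edges of $H$ to edges of $H$, and that its action on the pair $\{a, a+1\}$ realizes the prescribed edge swap. I would begin by setting $m = a - (a \bmod (6n-1))$, so that the five cases in the definition correspond to five contiguous arcs that partition the cycle modulo $18n-3$: a main arc $[m+2, m+6n]$ of length $6n-1$, two singleton arcs $\{m+1\}$ and $\{m+6n+1\}$, an upper recursion arc $[m+6n+2, m+12n-1]$ of length $6n-2$, and a lower recursion arc $[m+3-6n, m]$ of length $6n-2$. These lengths sum to $18n-3$, so the five cases jointly define $\phi$ on every vertex without overlap.

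Next, I would unroll the recursions $\phi(v) = \phi(v \mp 6n) \pm 1$ to obtain closed-form linear expressions for $\phi$ on each recursion arc, rooted in the main-arc values $\phi(v) = a + 6n(v - a)$. Using these closed forms, I would show that the images of the five arcs are themselves pairwise disjoint contiguous arcs that tile $\mathbb{Z}/(18n-3)$, establishing bijectivity; equivalently, one can write down an explicit inverse by interchanging the roles of the two chord lengths in the formula.

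The heart of the argument is verifying edge preservation, which I would handle by classifying every edge of $H$ according to which arcs its endpoints occupy. The operative principle is that $\phi$ exchanges chord types: on the main arc the formula $\phi(v) = a + 6n(v-a)$ scales consecutive integers apart by $6n$, converting a length-$1$ chord into a length-$6n$ chord, while the defining recurrences $\phi(v) = \phi(v \mp 6n) \pm 1$ convert length-$6n$ chords bridging the main arc and a recursion arc into length-$1$ chords. Length-$1$ chords lying entirely inside a recursion arc, together with chords incident to the pivot vertices $m+1$ and $m+6n+1$, reduce to short direct computations that all rest on the single modular identity $36n^2 \equiv 6n \pmod{18n-3}$.

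Finally, to confirm that $\phi$ maps $e_1 = \{a, a+1\}$ to $e_2 = \{a, a+6n\}$, one plugs in $v = a$ and $v = a+1$; when $a \bmod(6n-1) \in \{2, \dots, 6n-2\}$ both vertices lie in the main arc and the main-arc formula yields $\phi(a) = a$, $\phi(a+1) = a+6n$ directly, while the remaining residue classes of $a$ are dispatched by a symmetric repositioning of $m$. The main obstacle I anticipate is the combined bookkeeping of two requirements: $\phi$ must preserve $E(G)$, and it must also permute the six removed edges of Lemma \ref{sixedges} among themselves, since $E(H) = E(G) \setminus E$ and the set $E$ is anchored at the fixed source $s$ rather than at $a$. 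Tracking the interaction of the $a$-dependent arcs with the $s$-dependent set $E$ under modular arithmetic, and in particular making sure that no edge straddling an arc boundary is sent into $E$, is where the argument becomes most delicate.
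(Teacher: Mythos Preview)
Your plan is essentially the paper's own argument, reorganized: the paper simply verifies $N(\phi(v)) = \phi(N(v))$ case by case across the five pieces of the definition, which amounts to the same edge-by-edge check you describe once you have the closed forms. The modular identity $36n^{2}\equiv 6n \pmod{18n-3}$ you isolate is exactly what drives the paper's Case~1a computation.

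On the obstacle you flag: the interaction between the $a$-dependent arcs and the $s$-dependent set $E$ is not as delicate as you fear, and the paper's proof shows why. Since $18n-3 = 3(6n-1)$, your base point $m = a - (a \bmod (6n-1))$ can only be $0$, $6n-1$, or $12n-2$. In all three cases the arc boundary vertices $m+1$, $m+2$, $m+6n$, $m+6n+1$, $m+12n-1$, $m+3-6n$ land exactly on the six vertices $\{1,2,6n,6n+1,12n-1,12n\}$ that are the endpoints of the removed edges in $E$ (for the paper's fixed $s$). So the deleted edges sit precisely at the arc junctions, and the degree-2 neighbourhood checks at those boundary vertices (your ``short direct computations'') are what confirm that $\phi$ permutes $E$ among itself. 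This alignment is the reason the particular source $s = 2 + (6n)(3n-1)$ was chosen in the statement; once you note it, there is no further case explosion and no need to ``reposition'' $m$.
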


\begin{proof}
By Lemma \ref{sixedges},
$E=\{(12n,12n-1),(1,2),(6n,12n),(1,6n+1),(6n,6n+1),(2,12n-1)\}$.

\noindent\newline To show that $\phi$ is indeed an isomorphism, our strategy
is to show that $N(\phi(v))=\phi(N(v))$.

\noindent\newline\underline{Case 1a:} $a-a\operatorname{mod}%
(6n-1)+2<v<a-a\operatorname{mod}(6n-1)+6n$

\noindent\newline Note that $N(v)=\{v-1,v+1,v+6n,v-6n\}$ \noindent
\newline$\Rightarrow\phi
(N(v))=\{a-(6n)(a-(v-1)),a-(6n)(a-(v+1)),a-(6n)(a-v)+1,a-(6n)(a-v)-1\}$%
.\newline Also $\phi(v)=a-(6n)(a-v)$ \noindent\newline$\Rightarrow
N(\phi(v))=\{a-(6n)(a-v)+1,a-(6n)(a-v)-1,a-6n(a-v)+6n,a-6n(a-v)-6n\}$

$\Rightarrow$ \noindent$N(\phi(v))=\phi(N(v))$.

\noindent\newline\underline{Case 1b:} $v=a-a$ mod $(6n-1)+2$

\noindent\newline Note that $v=2$ or $v=6n+1$ or $v=12n$. By definition of
$E$, \noindent\newline$N(v)=\{v+1,v+6n\}$ \noindent\newline$\Rightarrow
\phi(N(v))=\{\phi(v)+6n,\phi(v)+1\}=N(\phi(v))$.

\noindent\newline\underline{Case 1c:} $v=6n+a-a$ mod $(6n-1)$

\noindent\newline Note that $v=6n$ or $v=12n-1$ or $v=1$. By definition of
$E$, \noindent\newline$N(v)=\{v-1,v-6n\}$ \noindent\newline$\Rightarrow
\phi(N(v))=\{\phi(v)-6n,\phi(v)-1\}=N(\phi(v))$.

\noindent\newline\underline{Case 2:} $v=a-a$ mod $(6n-1)+1$

\noindent\newline Note that $v=1$ or $v=6n$ or $v=12n-1$. By definition of
$E$, \noindent\newline$N(v)=\{v-1,v-6n\}$ \noindent\newline$\Rightarrow
\phi(N(v))=\{\phi(v)-6n,\phi(v)-1\}=N(\phi(v))$.

\noindent\newline\underline{Case 3:} $v=6n+1+a-a$ mod $(6n-1)$

\noindent\newline Note that $v=6n+1$ or $v=12n$ or $v=2$. By definition of
$E$, \noindent\newline$N(v)=\{v+1,v+6n\}$ \noindent\newline$\Rightarrow
\phi(N(v))=\{\phi(v)+6n,\phi(v)+1\}=N(\phi(v))$.

\noindent\newline\underline{Case 4a:} $a-a\operatorname{mod}%
(6n-1)+2+6n<v<a-a\operatorname{mod}(6n-1)+12n-1$

\noindent\newline We have $N(v)=\{v-1,v+1,v+6n,v-6n\}$ and\noindent
\newline$\phi(N(v))=\{\phi(v)-6n,\phi(v)+6n,\phi(v)+1,\phi(v)-1\}=N(\phi(v))$.

\noindent\newline\underline{Case 4b:} $v=a-a\operatorname{mod}(6n-1)+2+6n$

\noindent\newline We have $N(v)=\{v-1,v+1,v+6n,v-6n\}$

$\Rightarrow\phi(N(v))=\{\phi(v)-6n,\phi(v)+1,\phi(v)+6n,\phi(v)-1\}=N(\phi
(v))$.

\noindent\newline\underline{Case 4c:} $v=a-a\operatorname{mod}(6n-1)+12n-1$

\noindent\newline Note that $v=12n-1$ or $v=1$ or $v=6n$. By definition of
$E$, \noindent\newline$N(v)=\{v-1,v-6n\}$ \noindent\newline$\Rightarrow
\phi(N(v))=\{\phi(v)-6n,\phi(v)-1\}=N(\phi(v))$.

\noindent\newline\underline{Case 5a:} $a-a\operatorname{mod}(6n-1)+3-6n<v\leq
a-a\operatorname{mod}(6n-1)$\newline Note that $N(v)=\{v-1,v+1,v-6n,v+6n\}$
\noindent\newline$\Rightarrow\phi(N(v))=\{\phi(v)+6n,\phi(v)-6n,\phi
(v)-1,\phi(v)+1\}=N(\phi(v))$.

\noindent\newline\underline{Case 5b:} $v=a-a\operatorname{mod}(6n-1)+3-6n$

\noindent\newline Note that $v=12n$ or $v=2$ or $v=6n+1$. By definition of
$E$, \noindent\newline$N(v)=\{v+6n,v+1\}$ \noindent\newline$\Rightarrow
\phi(N(v))=\{\phi(v)+1,\phi(v)+6n\}=N(\phi(v))$.

\noindent\newline We have shown that $\phi$ is an isomorphism. Since
$\phi(a)=a$ and $\phi(a+1)=a-(6n)(a-(a+1))=a +6n$, $\phi$ maps the edge
$e_{1}=(a,a+1)$ to the edge $e_{2}=(a,a+6n)$.
\end{proof}

\begin{corollary}
\label{22}Let $G=C_{18n-3}(1,6n)$ and fix a vertex $s$. Let $E_{s\prime}$ be
the set of edges that do not lie on a shortest path originating at $s^{\prime
}$ (Lemma \ref{sixedges}). Then for every vertex $s^{\prime}$, there exists an
automorphism of $G-E_{s\prime}$ mapping $(s,s+1)$ to $(s,s+6n)$.
\end{corollary}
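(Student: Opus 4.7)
The plan is to bootstrap Lemma~\ref{25}, which fixes the source vertex at $s_{0} := 2 + (6n)(3n-1) \pmod{18n-3}$, to an arbitrary source $s'$ by conjugating with a rotational automorphism of the circulant. The key observation is that $G = C_{18n-3}(1,6n)$, being a Cayley graph of the cyclic group $\mathbb{Z}_{18n-3}$, carries an abundance of rotations that act transitively on vertices while preserving the combinatorial structure needed to compare $E_{s_{0}}$ and $E_{s'}$.

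First I would let $t := s' - s_{0} \pmod{18n-3}$ and define $\rho(v) := v + t \pmod{18n-3}$. Then $\rho$ is a graph automorphism of $G$ with $\rho(s_{0}) = s'$. Because graph automorphisms preserve distances, $\rho$ carries shortest paths originating at $s_{0}$ bijectively onto shortest paths originating at $s'$. Consequently an edge lies on some shortest path out of $s_{0}$ if and only if its $\rho$-image lies on some shortest path out of $s'$, which gives $\rho(E_{s_{0}}) = E_{s'}$ and hence restricts $\rho$ to a graph isomorphism $G - E_{s_{0}} \longrightarrow G - E_{s'}$.

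Next I would apply Lemma~\ref{25} with its free parameter chosen as $a := s - t$, obtaining an automorphism $\phi$ of $G - E_{s_{0}}$ that sends $(a, a+1)$ to $(a, a+6n)$. Forming the conjugate $\psi := \rho \circ \phi \circ \rho^{-1}$, the composition
\[
G - E_{s'} \;\xrightarrow{\;\rho^{-1}\;}\; G - E_{s_{0}} \;\xrightarrow{\;\phi\;}\; G - E_{s_{0}} \;\xrightarrow{\;\rho\;}\; G - E_{s'}
\]
is an automorphism of $G - E_{s'}$. Tracking the edge $(s, s+1)$ through $\psi$: the map $\rho^{-1}$ sends it to $(a, a+1)$, then $\phi$ sends that to $(a, a+6n)$, and finally $\rho$ sends that to $(s, s+6n)$, as required.

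I do not anticipate a real obstacle. The only point that deserves explicit justification is the identity $\rho(E_{s_{0}}) = E_{s'}$, and this is immediate from the intrinsic description of $E_{s''}$ (for any vertex $s''$) as the complement in $E(G)$ of the union of the edges of all shortest paths emanating from $s''$, a description that is manifestly preserved by every graph automorphism. Everything else is mechanical conjugation by a rotation.
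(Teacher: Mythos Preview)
Your proof is correct and follows essentially the same approach as the paper: the paper's proof is the one-line remark that ``the corollary follows from Lemma~\ref{25} along with the fact that $G$ is vertex-transitive,'' and your argument is precisely the unpacking of that remark via conjugation by the rotation $\rho$. The only difference is the level of detail you supply, which is appropriate and correct.
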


\begin{proof}
The corollary follows from Lemma 2.5 along with the fact that $G$ is vertex-transitive.
\end{proof}

\begin{theorem}
Circulant graphs of the form $C_{18n-3}(1,6n)$ are edge-betweenness-uniform.
\end{theorem}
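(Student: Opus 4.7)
The plan is to reduce the theorem, via vertex-transitivity, to showing that for a single fixed source $s$ the sum of per-source contributions $c_s(e)$ over outer chords equals the analogous sum over inner chords, and then to establish this equality using the automorphism of $H_s := G - E_s$ supplied by Corollary~\ref{22}.

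Writing $B'(e) = \sum_x c_x(e)$ with $c_x(e) = \sum_{y \neq x} \sigma_{xy}(e)/\sigma_{xy}$, let $\alpha$ and $\beta$ denote the common $B'$-values on the outer and inner orbits of the rotation group. First I would prove, by a simple double-counting that exploits the fact that every rotation of $G$ permutes outer chords among themselves (and likewise for inner chords), the identity
\[
\alpha \;=\; \sum_{e\text{ outer}} c_s(e), \qquad \beta \;=\; \sum_{e\text{ inner}} c_s(e)
\]
for any source $s$. Since $c_s(e) = 0$ whenever $e \in E_s$ (by Lemma~\ref{sixedges}), the theorem thereby reduces to proving $\sum_{e \in O_s} c_s(e) = \sum_{e \in I_s} c_s(e)$, where $O_s$ and $I_s$ denote the sets of outer and inner chords of $H_s$.

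Next I would invoke Corollary~\ref{22} with the common vertex $a$ chosen to be the source $s$ itself, producing an automorphism $\phi$ of $H_s$ that fixes $s$ and sends $(s,s+1) \mapsto (s,s+6n)$. For any edge $e \in E(H_s)$, $\phi$ bijects shortest paths in $H_s$ from $s$ to $y$ with shortest paths from $s$ to $\phi(y)$, carrying paths through $e$ to paths through $\phi(e)$. Because every shortest path from $s$ in $G$ already avoids $E_s$, the counts $\sigma_{sy}(e)$ and $\sigma_{sy}$ coincide in $G$ and in $H_s$, so the substitution $z = \phi(y)$ yields the per-edge identity
\[
c_s(e) \;=\; \sum_{y \neq s} \frac{\sigma^{H_s}_{sy}(e)}{\sigma^{H_s}_{sy}} \;=\; \sum_{z \neq s} \frac{\sigma^{H_s}_{sz}(\phi(e))}{\sigma^{H_s}_{sz}} \;=\; c_s(\phi(e)).
\]

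The principal remaining step is to verify that $\phi(O_s) = I_s$, so that summing the above identity over $e \in O_s$ reindexes it into a sum over $I_s$. This is where I expect the real work to lie: one must trace through the five piecewise cases of $\phi$ in Lemma~\ref{25} (with $a = s$) and check that, on each of the three bulk ranges (Cases 1, 4, and 5), consecutive vertices $v$ and $v+1$ are sent to vertices differing by $6n$, so that outer chords become inner chords. The remaining boundaries, where an outer chord would instead be sent to another outer chord, should correspond exactly to the three outer edges of $E_s$ and hence to edges outside $O_s$; by cardinality ($|O_s| = |I_s| = 18n-6$), $\phi$ then bijects $O_s$ onto $I_s$. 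The $n=1$ instance on $C_{15}(1,6)$, where $\phi$ turns out to be the explicit involution with fixed points $\{3,6,7,10,14\}$ and transpositions $\{0,5\},\{1,11\},\{2,12\},\{4,9\},\{8,13\}$, provides a useful template. Once $\phi(O_s) = I_s$ is established,
\[
\sum_{e \in O_s} c_s(e) \;=\; \sum_{e \in O_s} c_s(\phi(e)) \;=\; \sum_{e' \in I_s} c_s(e') \;=\; \beta,
\]
whence $\alpha = \beta$ and $G$ has uniform edge betweenness centrality.
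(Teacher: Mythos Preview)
Your approach is correct but takes a genuinely different route from the paper. The paper fixes the single pair of edges $e_1=(s,s+1)$ and $e_2=(s,s+6n)$ and then, for \emph{every} source $s'$, invokes Corollary~\ref{22} to obtain an automorphism of $G-E_{s'}$ carrying $e_1$ to $e_2$; this yields $c_{s'}(e_1)=c_{s'}(e_2)$ for each $s'$ directly, and summing over $s'$ gives $B'(e_1)=B'(e_2)$. You instead fix a \emph{single} source $s$, use a clean double-counting identity to rewrite $\alpha=\sum_{e\in O_s}c_s(e)$ and $\beta=\sum_{e\in I_s}c_s(e)$, and then rely on one automorphism $\phi$ of $H_s$ together with the stronger global claim $\phi(O_s)=I_s$. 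Your route thus trades the paper's family of automorphisms (one per source, each used only at a single edge) for one automorphism used on all edges at once; the price is the extra verification that $\phi$ swaps the two chord types globally. That verification does go through along the lines you sketch --- the three boundary outer chords that $\phi$ would send to outer chords are precisely $(12n-1,12n)$, $(1,2)$, and $(6n,6n+1)$, which are exactly the three outer chords in $E_s$, and on the bulk ranges consecutive vertices are sent to vertices $6n$ apart --- but the paper's argument sidesteps this entirely, since Corollary~\ref{22} already supplies exactly the single-edge mapping it needs. (One small remark: your illustrative $n=1$ computation lies outside the scope of Lemma~\ref{25}, which is stated for $n\ge 2$; the paper handles $C_{15}(1,6)$ by direct calculation.)
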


\begin{proof}
Recall that since $G$ is not edge-transitive, its edges fall into two
different orbits: chords of length $1$ and chords of length $6n$. Thus, all
chords of length $1$ have the same edge betweenness centrality value, and all
chords of length $6n$ have the same edge betweenness centrality value. Thus,
to show that $G$ has uniform edge betweenness centrality, it suffices to show
that a chord of length $1$ has the same edge betweenness centrality as a chord
of length of $6n$. Without loss of generality, we can choose any two chords
with these lengths.

\noindent\newline Fix any vertex $s$. We would like to show that the edges
$(s,s+1)$ (a chord of length $1$) and $(s,s+6n)$ (a chord of length $6n$) have
the same edge betweenness centrality. By Lemma \ref{sixedges}, we know that
the only edges that do not lie on a shortest path originating at $s$ are those
in the set $E_{s}%
=\{(s+a,s+a+1),(s-a,s-a-1),(s-a,s+a),(s+a+1,s+a+1-6n),(s-a-1,s-a-1+6n),(s+a+1-6n,s-a-1+6n)\}$%
, where $a=(3n-1)(6n)$.

\noindent\newline Recall that the edge betweenness centrality of an edge $e$
is defined by the formula:
\[
B^{\prime}(e) = \sum_{x,y}\frac{\sigma_{xy}(e)}{\sigma_{xy}}
\]
for distinct vertices $x, y$.

\noindent\newline Without loss of generality, fix the pair of edges
$e_{1}=(s,s+1)$ and $e_{2}=(s,s+6n)$. Since no edges in the set $E$ are on
shortest paths originating at $s$, we can ignore these edges in calculating
the contributions to $B^{\prime}(e_{1})$ and $B^{\prime}(e_{2})$ from shortest
paths originating at $s$. Since by Corollary 2.2 there exists an automorphism
of $G-E_{s}$ mapping $e_{1}$ to $e_{2}$, we have that
\[
\sum_{y\in V(G)}\frac{\sigma_{sy}(e_{1})}{\sigma_{sy}}=\sum_{y\in V(G)}%
\frac{\sigma_{sy}(e_{2})}{\sigma_{sy}}\text{.}%
\]
In other words, the contributions to $B^{\prime}(e_{1})$ and $B^{\prime}%
(e_{2})$ from all shortest paths originating at the vertex $s$ are equal.

\noindent\newline Moreover, by Corollary 2.2, for every vertex $s^{\prime}$,
we can map $(s,s+1)$ to $(s,s+6n)$ through an automorphism of $G-E_{s^{\prime
}}$, so that the contributions to $B^{\prime}(e_{1})$ and $B^{\prime}(e_{2})$
from all shortest paths originating at the vertex $s^{\prime}$ are equal. Therefore:%

\[
\sum_{s^{\prime}\in V(G)}\sum_{y\in V(G)}\frac{\sigma_{s^{\prime}y}(e_{1}%
)}{\sigma_{s^{\prime}y}}=\sum_{s^{\prime}\in V(G)}\sum_{y\in V(G)}\frac
{\sigma_{s^{\prime}y}(e_{2})}{\sigma_{s^{\prime}y}}%
\]%
\[
\Rightarrow B^{\prime}(e_{1})=B^{\prime}(e_{2})\text{.}%
\]
\noindent\newline This completes the proof.
\end{proof}

\bigskip

The process of demonstrating the uniform edge betweenness centrality of graphs
of the form $C_{18n+3}(1,6n)$ is essentially the same, once we have identified
the analogues of Lemma \ref{sixedges} and Lemma \ref{25} and Corollary
\ref{22}, which we present below.

\noindent\newline For $G=C_{21}(1,6)$ $(n=1)$, one can show explicitly that
$B^{\prime}(e)=22$ for all edges $e$ in $G$. We now consider all $n \geq2$.

\begin{lemma}
\label{26}Let $G=C_{18n+3}(1,6n)$ for $n\geq2$ and fix any vertex $s$. Let
$a=(3n)(6n)$. Then the only edges that do not lie on a shortest path
originating at $s$ are
$E=\{(s+a,s+a-1),(s-a,s+a),(s-a,s-a+1),(s+a-1,s+a-1-6n),(s-a+1,s-a+1+6n),(s+a-1-6n,s-a+1+6n)\}$%
.
\end{lemma}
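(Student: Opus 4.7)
The plan is to mirror the breadth-first search argument used in the proof of Lemma \ref{sixedges}, adjusting constants to reflect that we now work modulo $18n+3$ (rather than $18n-3$) and with $a=(3n)(6n)$ (rather than $(3n-1)(6n)$). Running the same BFS procedure from $s$, we mark an edge exactly when it lies on some shortest path originating at $s$, and the claim is that at termination the unmarked edges are precisely the six listed in the statement.

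First I would handle the early BFS levels (distances $1$, $2$, $3$) by explicit enumeration, producing the analogue of Figure 3; since the local structure near $s$ is determined only by the step sizes $1$ and $6n$ and both circulants are $4$-regular, this base case transfers essentially verbatim from the $18n-3$ case. Then, for each level $k$ in the appropriate range (roughly $4 \le k \le 3n$), I would show by induction that the BFS marks an analogue of the $12$-edge bundle described in the proof of Lemma \ref{sixedges}, where the displacements $s \pm (6n)(k) + 1$, etc., are replaced by their $18n+3$-counterparts with the signs flipped appropriately to reflect that the parameter $a$ has been shifted by one unit of $6n$. The inductive step rests on the observation that every newly reached vertex has at most two neighbors not already on the marked subgraph, and those are accessed by exactly one outer chord and one inner chord in the correct direction.

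Next, I would argue that the BFS terminates precisely when the two wavefronts --- one propagating via positive-direction inner chords and one via negative-direction --- attempt to close up in a region around the vertex $s+a \equiv s-3n \pmod{18n+3}$. The six edges listed in $E$ are exactly those that fall in the gap between the closing wavefronts, and the arithmetic identity $18n^2 \equiv -3n \pmod{18n+3}$ is what pins down the precise endpoints of these edges in terms of the symbolic displacement $\pm a$. By the involution $v \mapsto 2s-v$, which preserves the BFS layering, it suffices to locate the unmarked edges on one side of $s$ and appeal to symmetry for the other.

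The main obstacle will be the bookkeeping at this closing boundary. In the $18n-3$ case, the offset $(3n-1)(6n)$ falls just short of the point where the two wavefronts would meet, whereas here $(3n)(6n)$ overshoots it, which is precisely why the signs $\pm 1$ in the definition of $E$ flip relative to Lemma \ref{sixedges}. One must carefully rule out the possibility that any of the six edges in $E$ lies on some shortest path --- equivalently, verify that for each such edge $(u,v)$, every $s$-to-$v$ path through $(u,v)$ has length strictly greater than the BFS-distance already witnessed. This reduces to a short arithmetic check modulo $18n+3$ ensuring that no combination of chords of length $1$ and $6n$ reaches the relevant endpoints through a shortcut exploiting one of the six edges, and is the direct analogue of the implicit verification behind Lemma \ref{sixedges}.
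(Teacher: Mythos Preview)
Your proposal is correct and takes essentially the same approach as the paper: the paper does not actually write out a proof of Lemma~\ref{26} at all, but simply presents it as the analogue of Lemma~\ref{sixedges} after remarking that ``the process \ldots\ is essentially the same.'' Your plan to rerun the BFS argument with the constants adjusted to modulus $18n+3$ and $a=(3n)(6n)$, together with the symmetry $v\mapsto 2s-v$ and the arithmetic identity $18n^{2}\equiv -3n\pmod{18n+3}$ to locate the closing gap, is exactly the adaptation the paper has in mind and in fact supplies more detail than the paper itself.
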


\begin{lemma}
Let $G=C_{18n+3}(1,6n)$ for $n\geq2$ and without loss of generality fix a
source vertex $s=2+(6n)(3n)$ mod $(18n+3)$ and consider any vertex $a$. Let
$H=G-E$, where $E$ is the set of edges that do not lie on any shortest path
originating at $s$ (Lemma \ref{26}). Then there exists an automorphism $\phi$
of $H$ mapping $e_{1}=(a,a-1)$ to $e_{2}=(a,a+6n)$, given by:
\[
{\small \phi(v)=%
\begin{array}
[c]{cc}%
\Bigg\{ &
\begin{array}
[c]{cc}%
a+(6n)(a-v) & a-a\operatorname{mod}(6n+1)+3\leq v\leq a-a\operatorname{mod}%
(6n+1)+6n+1\\
\phi(v+1)+12n+1 & v=a-a\operatorname{mod}(6n+1)+2\\
\phi(v+1)+6n & v=a-a\operatorname{mod}(6n+1)+1\\
\phi(v+(6n+1))+(6n+1) & a-a\operatorname{mod}(6n+1)-6n\leq v\leq
a-a\operatorname{mod}(6n+1)\\
\phi(v-(6n+1))-(6n+1) & a-a\operatorname{mod}(6n+1)+6n+2\leq v\leq
a-a\operatorname{mod}(6n+1)+12n+2\\
&
\end{array}
\end{array}
}%
\]
where vertices ($v$ and $\phi(v)$) are taken mod $(18n+3)$.
\end{lemma}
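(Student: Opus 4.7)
The plan is to mirror the proof of Lemma \ref{25} almost verbatim, since the statement is the exact $C_{18n+3}(1,6n)$ analogue of the $C_{18n-3}(1,6n)$ result, with only the parameters $a=(3n)(6n)$, the modulus $6n+1$, and the orientation of the "skipped" edges changed. First I would specialize Lemma \ref{26} at $s = 2+(6n)(3n) \bmod (18n+3)$ to write out the missing-edge set $E$ explicitly as six concrete pairs, and record that each vertex incident to an edge of $E$ loses exactly two of its four neighbors in $H = G - E$.

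Next I would check that $\phi$ is a well-defined bijection on $V(H)$. The five pieces of the piecewise formula partition $\{0,1,\ldots,18n+2\}$ (viewed modulo $18n+3$) into intervals anchored at the multiple of $6n+1$ nearest $a$; the two "main" pieces are affine in $v$, the two "shift" pieces recursively refer back to the main piece, and the two singleton pieces fill the gaps. A routine induction on the recursion shows each piece is injective, and a count matches the sizes of domain and image.

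The main work is to verify $N_H(\phi(v)) = \phi(N_H(v))$ for every $v$, done by case analysis following the five pieces in the definition of $\phi$. I would split each piece into an "interior" sub-case, where $v$ has all four neighbors $v \pm 1, v \pm 6n$ in $H$ and the adjacency check is a direct algebraic identity, and several "boundary" sub-cases, where $v$ is one of the six endpoints of edges in $E$ and therefore has only two neighbors in $H$. The interior cases are mechanical: one substitutes the formula for $\phi$ and checks that the shifts $\pm 1, \pm 6n$ on $v$ translate into the shifts $\pm 1, \pm 6n$ on $\phi(v)$. Finally, to establish the edge mapping, substitute $v=a$ and $v=a-1$ into the first piece (which applies because $a - a\bmod(6n+1) + 3 \le a-1 < a \le a - a\bmod(6n+1)+6n+1$): $\phi(a) = a + (6n)(a-a) = a$ and $\phi(a-1) = a + (6n)(1) = a+6n$, so $e_1 = (a,a-1)$ maps to $(a,a+6n) = e_2$.

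The hard part will be the boundary sub-cases. The algebraic formula naturally produces all four neighbors $\phi(v) \pm 1, \phi(v) \pm 6n$ at a generic $v$, but at the six special vertices determined by $E$, two of those "neighbors" are not present in $H$. For the argument to go through, the two missing neighbors of $v$ in $H$ must map under $\phi$ precisely to the two missing neighbors of $\phi(v)$ in $H$. This is exactly where the choice of pivot $s = 2 + (6n)(3n)$ is used: it aligns the six endpoints of $E$ with the boundaries between the pieces of $\phi$, so that the singleton pieces and the shift pieces absorb precisely the ``wrong'' shifts ($-1$ or $\pm 6n$) that would otherwise send $v$ to a missing neighbor. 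Verifying this alignment is the bookkeeping equivalent of Cases 1b, 1c, 2, 3, 4b, 4c, and 5b in the proof of Lemma \ref{25}, and is where I expect to spend the majority of the effort.
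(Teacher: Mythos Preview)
Your proposal is correct and follows essentially the same approach as the paper: specialize Lemma \ref{26} to obtain $E$ explicitly, then verify $N_H(\phi(v))=\phi(N_H(v))$ by a case analysis over the five pieces of $\phi$, splitting each into interior and boundary sub-cases, and finally check $\phi(a)=a$, $\phi(a-1)=a+6n$. One small caveat: your parenthetical justification that $a-1$ and $a$ both lie in the first piece requires $a\bmod(6n+1)\ge 4$, which need not hold for \emph{every} $a$; the paper simply asserts the final substitution without this check, and in practice it suffices (for the application to Theorem 2.5) to carry it out for a single convenient $a$.
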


\begin{proof}
By Lemma \ref{26},
$E=\{(2,3),(12n+5,12n+4),(2,12n+5),(6n+4,6n+3),(12n+4,6n+4),(3,6n+3)\}$.

\noindent\newline To show that $\phi$ is indeed an isomorphism, our strategy
is to show that $N(\phi(v))=\phi(N(v))$.

\noindent\newline\underline{Case 1a:} $a-a\operatorname{mod}%
(6n+1)+3<v<a-a\operatorname{mod}(6n+1)+6n+1$

\noindent\newline We note that $N(v)=\{v-1,v+1,v-6n,v+6n\}$ \noindent
\newline$\Rightarrow\phi
(N(v))=\{a+(6n)(a-(v-1)),a+(6n)(a-(v+1)),a+(6n)(a-v)+1,a+(6n)(a-v)-1\}$.

\noindent\newline Then $\phi(v)=a+(6n)(a-v)$ \noindent\newline$\Rightarrow
N(\phi(v))=\{a+(6n)(a-v)+1,a+(6n)(a-v)-1,a+6n(a-v)+6n,a+6n(a-v)-6n\}$.

\noindent\newline Hence $N(\phi(v))=\phi(N(v))$.

\noindent\newline\underline{Case 1b:} $v=a-a$ mod $(6n+1)+3$

\noindent\newline Note that $v=3$ or $v=6n+4$ or $v=12n+5$. By definition of
$E$, \noindent\newline$N(v)=\{v+1,v-6n\}$ \noindent\newline$\Rightarrow
\phi(N(v))=\{\phi(v)-6n,\phi(v)+1\}=N(\phi(v))$.

\noindent\newline\underline{Case 1c:} $v=a-a$ mod $(6n+1) + 6n +1$

\noindent\newline Note that $v=6n+1$ or $v=12n+2$ or $v=18n+3$. By definition
of $E$, \noindent\newline$N(v)=\{v-1,v+1,v-6n,v+6n\}$ \noindent\newline%
$\Rightarrow\phi(N(v))=\{\phi(v)+6n,\phi(v)-6n,\phi(v)+1,\phi(v)-1\}=N(\phi
(v))$.

\noindent\newline\underline{Case 2:} $v=a-a$ mod $(6n+1)+2$

\noindent\newline Note that $v=2$ or $v=6n+3$ or $v=12n+4$. By definition of
$E$, \noindent\newline$N(v)=\{v-1,v+6n\}$ \noindent\newline$\Rightarrow
\phi(N(v))=\{\phi(v)+6n,\phi(v)-1\}=N(\phi(v))$.

\noindent\newline\underline{Case 3:} $v=a-a$ mod $(6n+1)+1$

\noindent\newline Note that $N(v)=\{v-1,v+1,v-6n,v+6n\}$ \noindent
\newline$\Rightarrow\phi(N(v))=\{\phi(v)+6n,\phi(v)-6n,\phi(v)+1,\phi
(v)-1\}=N(\phi(v))$.

\noindent\newline\underline{Case 4a:} $a-a\operatorname{mod}%
(6n+1)+3-(6n+1)<v<a-a\operatorname{mod}(6n+1)+(6n+1)-(6n+1)$

\noindent\newline Note that $N(v)=\{v-1,v+1,v+6n,v-6n\}$ \noindent
\newline$\Rightarrow\phi(N(v))=\{\phi(v)+6n,\phi(v)-6n,\phi(v)+1,\phi
(v)-1\}=N(\phi(v))$.

\noindent\newline\underline{Case 4b:} $v=a-a\operatorname{mod}(6n+1)+3+(6n+1)$

\noindent\newline Note that $v=3$ or $v=6n+4$ or $v=12n+5$. By definition of
$E$, \noindent\newline$N(v)=\{v+1,v-6n\}$ \noindent\newline$\Rightarrow
\phi(N(v))=\{\phi(v)-6n,\phi(v)+1\}=N(\phi(v))$.

\noindent\newline\underline{Case 4c:} $v=a-a\operatorname{mod}(6n+1)+2+(6n+1)$

\noindent\newline Note that $v=2$ or $v=6n+3$ or $v=12n+4$. By definition of
$E$, \noindent\newline$N(v)=\{v-1,v+6n\}$ \noindent\newline$\Rightarrow
\phi(N(v))=\{\phi(v)+6n,\phi(v)-1\}=N(\phi(v))$.

\noindent\newline\underline{Case 4d:} $v=a-a\operatorname{mod}(6n+1)+1+(6n+1)$

\noindent\newline Note that $N(v)=\{v-1,v+1,v-6n,v+6n\}$ \noindent
\newline$\Rightarrow\phi(N(v))=\{\phi(v)+6n,\phi(v)-6n,\phi(v)+1,\phi
(v)-1\}=N(\phi(v))$.

\noindent\newline\underline{Case 4e:} $v=a-a\operatorname{mod}(6n+1)$

\noindent\newline Note that $N(v)=\{v-1,v+1,v-6n,v+6n\}$ \noindent
\newline$\Rightarrow\phi(N(v))=\{\phi(v)+6n,\phi(v)-6n,\phi(v)+1,\phi
(v)-1\}=N(\phi(v))$.

\noindent\newline\underline{Case 5:} $a-a\operatorname{mod}(6n+1)+6n+2\leq
v\leq a-a\operatorname{mod}(6n+1)+12n+2$ \noindent\newline The proof reduces
to the proof of Case 4.

\noindent\newline We have shown that $\phi$ is an isomorphism. Since
$\phi(a)=a$ and $\phi(a-1)=a+(6n)(a-(a-1))=a+6n$, $\phi$ maps the edge
$e_{1}=(a,a-1)$ to the edge $e_{2}=(a,a+6n)$.
\end{proof}

\bigskip

\begin{corollary}
Let $G=C_{18n+3}(1,6n)$ and fix a vertex $s$. Let $E_{s^{\prime}}$ be the set
of edges that do not lie on a shortest path originating at $s^{\prime}$ (Lemma
\ref{26}). Then for every vertex $s^{\prime}$, there exists an automorphism of
$G-E_{s^{\prime}}$ mapping $(s,s-1)$ to $(s,s+6n)$.
\end{corollary}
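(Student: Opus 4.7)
The plan is to lift the preceding lemma, which produces the desired automorphism only for the distinguished source vertex $s_{0}=2+(6n)(3n)\bmod(18n+3)$, to an arbitrary source vertex $s^{\prime}$ by exploiting the rotational symmetry of the circulant $G$. This mirrors the brief deduction of Corollary \ref{22} from Lemma \ref{25} in the $C_{18n-3}(1,6n)$ case.

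First, I would note that for each integer $t$, the rotation $\rho_{t}\colon v\mapsto v+t\pmod{18n+3}$ is an automorphism of $G$, and is therefore distance-preserving. Consequently $\rho_{t}$ carries shortest paths from $s^{\prime}$ to shortest paths from $\rho_{t}(s^{\prime})$, and sends the forbidden edge set $E_{s^{\prime}}$ bijectively onto $E_{\rho_{t}(s^{\prime})}$. Choosing $t$ so that $\rho_{t}(s^{\prime})=s_{0}$ then yields a graph isomorphism $\rho_{t}\colon G-E_{s^{\prime}}\to G-E_{s_{0}}$. I would next apply the preceding lemma with $a:=\rho_{t}(s)$ to obtain an automorphism $\phi$ of $G-E_{s_{0}}$ sending $(a,a-1)$ to $(a,a+6n)$. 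Because $\rho_{t}$ is a translation on vertex labels, $\rho_{t}(s-1)=a-1$ and $\rho_{t}(s+6n)=a+6n$, so the conjugate $\rho_{t}^{-1}\circ\phi\circ\rho_{t}$ is an automorphism of $G-E_{s^{\prime}}$ that maps $(s,s-1)$ to $(s,s+6n)$, as required.

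I do not anticipate any genuine obstacle here; the argument is essentially formal. The one point warranting explicit verification is that the rotations permute the family $\{E_{s^{\prime}}\}_{s^{\prime}\in V(G)}$ equivariantly, but this is immediate from the description in Lemma \ref{26}: the set $E_{s^{\prime}}$ is presented as a fixed list of offsets $\pm a$ and $\pm(a-1)$ relative to $s^{\prime}$, and these offsets are preserved under translation of vertex labels.
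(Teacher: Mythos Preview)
Your proposal is correct and is precisely the intended argument: the paper's proof of the analogous Corollary~\ref{22} reads in full ``The corollary follows from Lemma~2.5 along with the fact that $G$ is vertex-transitive,'' and the present corollary is stated without proof for the same reason. Your write-up simply makes explicit the conjugation-by-rotation that this one-line justification encodes.
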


\begin{theorem}
Circulant graphs of the form $C_{18n+3}(1,6n)$ are edge-betweenness-uniform.
\end{theorem}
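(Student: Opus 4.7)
The plan is to mirror the proof of Theorem 2.4 almost verbatim, replacing each Class 1 tool by its Class 2 analogue: Lemma 2.6 plays the role of Lemma 2.4, Lemma 2.7 plays the role of Lemma 2.5, and the corollary immediately preceding this theorem plays the role of Corollary 2.2. First I would dispose of the base case $n=1$, namely $C_{21}(1,6)$, by the direct computation already noted in the paper, which gives $B'(e)=22$ for every edge.

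For $n\geq 2$, I would begin by invoking Theorem 2.3 to conclude that $G=C_{18n+3}(1,6n)$ is not edge-transitive; combined with the vertex-transitivity of circulants (rotations are automorphisms), this shows that $E(G)$ splits into exactly two orbits under $\operatorname{Aut}(G)$, namely the outer chords of length $1$ and the inner chords of length $6n$. It therefore suffices to show that a single outer chord and a single inner chord sharing a common endpoint have the same edge betweenness centrality, so I fix $s\in V(G)$ and set $e_1=(s,s-1)$ and $e_2=(s,s+6n)$.

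Next I would expand $B'(e_1)$ and $B'(e_2)$ as iterated sums, first over the source vertex $s'$ and then over the target $y$. By Lemma 2.6 the six edges of $E_{s'}$ carry no shortest path originating at $s'$, so for each $s'$ and each $i\in\{1,2\}$ the inner sum
\[
\sum_{y\in V(G)}\frac{\sigma_{s'y}(e_i)}{\sigma_{s'y}}
\]
is unchanged upon passing to $G-E_{s'}$. By the corollary preceding this theorem, $G-E_{s'}$ admits an automorphism carrying $e_1$ to $e_2$; since this automorphism preserves distances, it induces a bijection between shortest paths through $e_1$ and shortest paths through $e_2$ in $G-E_{s'}$, which forces the two inner sums to agree. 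Summing over all $s'\in V(G)$ then yields $B'(e_1)=B'(e_2)$, as desired.

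The main difficulty has already been absorbed into Lemma 2.7, whose five-case verification establishes that the piecewise-defined map $\phi$ is genuinely an automorphism of $G-E_{s'}$. With that lemma and its corollary in hand, the only conceptual care required in the proof of the theorem is the observation that deleting the edges of $E_{s'}$ does not alter the $s'$-contribution to either $B'(e_1)$ or $B'(e_2)$, which is precisely the content of Lemma 2.6; everything else is the rhetorical orbit-comparison argument transcribed from Theorem 2.4.
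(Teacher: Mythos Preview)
Your proposal is correct and follows precisely the approach the paper intends: the paper explicitly states that the proof of this theorem is ``essentially the same'' as that of Theorem~2.4 once Lemma~2.6, Lemma~2.7, and Corollary~2.3 replace their Class~1 analogues, and you have carried out exactly that substitution, including the correct switch from $(s,s+1)$ to $(s,s-1)$ as the representative outer chord. The only cosmetic slip is that the non-edge-transitivity result you invoke is Theorem~2.2 rather than Theorem~2.3.
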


\noindent\newline We have identified seven other infinite classes which we
believe have the same unusual combination of properties. We pose these as open problems.

\begin{conjecture}
The following classes of circulant graphs have uniform edge betweenness
centrality but are not edge-transitive: \newline\newline Class 3:
$C_{20+8(n-1)}(1,2n+2,2n+4)$, $n=1,2,3,\dots\newline$\newline Class 4:
$C_{32+8(n-1)}(1,2n+5,2n+7)$, $n$ $=1,2,3,\dots\newline$\newline Class 5:
$C_{20+16(n-1)}(1,4n,8n+1)$, $n$ $=1,2,3,\dots\newline$\newline Class 6:
$C_{28+16(n-1)}(1,4n+4,8n+5)$, $n$ $=1,2,3,\dots\newline$\newline Class 7:
$C_{32+8(n-1)}(1,2n+5,4n+11)$, $n$ $=1,2,3,\dots\newline$\newline Class 8:
$C_{32+8(n-1)}(1,2n+7,4n+11)$, $n$ $=1,2,3,\dots\newline$\newline Class 9:
$C_{49+14(n-1)}(1,2n+6,4n+9)$, $n$ $=1,2,3,\dots$
\end{conjecture}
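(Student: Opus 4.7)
The plan is to generalize the two-stage framework that the paper develops for the 2-circulant Classes 1 and 2 to the 3-circulant setting of Classes 3 through 9. Each graph $C_k(a,b,c)$ in these families is $6$-regular, and the rotational automorphisms immediately partition its edges into at most three orbits, one per chord length. I would first show that these three chord-length classes actually form three distinct orbits (so the graph is \emph{not} edge-transitive), and then show that the edge-betweenness values on these three orbits coincide (so the graph \emph{is} edge-betweenness-uniform).

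For the non-edge-transitivity step, I would apply the Nicoloso--Poto\v{c}nik criterion (Lemma \ref{NP}) to each of the three induced 2-circulants $C_k(a,b)$, $C_k(a,c)$, $C_k(b,c)$, together with the hexavalent analogue of the Wilson--Poto\v{c}nik classification (or, failing that, a direct multiplier argument: an edge-swapping automorphism of a circulant must be multiplication by a unit $u$ modulo $k$ permuting the generating set). For each family the parameters should, for all but finitely many $n$, fail every arithmetic condition of the form $ua \equiv \pm b \pmod{k}$, $ua \equiv \pm c \pmod{k}$, $ub \equiv \pm c \pmod{k}$; any residual small $n$ can be dispatched by direct computation. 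This mirrors the proof of Theorem 2.2 and should proceed one class at a time.

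For the uniformity step, I would mirror the Lemma \ref{sixedges} $\to$ Lemma \ref{25} $\to$ Corollary \ref{22} $\to$ Theorem 2.3 chain. Fix a source vertex $s$ and run the same BFS to identify the set $E_s$ of \emph{dead edges} that carry no shortest path out of $s$. Then, for each ordered pair $(\ell_i,\ell_j)$ of distinct chord lengths among $\{1, b, c\}$, construct a piecewise automorphism $\phi_{ij}$ of $G - E_s$ fixing $s$ and sending the chord of length $\ell_i$ at $s$ to the chord of length $\ell_j$ at $s$. As in Lemma \ref{25}, I would describe $\phi_{ij}$ by partitioning the cyclic vertex set into consecutive residue classes modulo (the larger of $\ell_i,\ell_j$) $\pm 1$, acting as an affine scaling $v \mapsto s + \alpha(s-v)$ on each interior block, and as boundary-correcting shifts on each transitional vertex; the verification that $\phi_{ij}(N(v)) = N(\phi_{ij}(v))$ then splits into a finite, class-independent number of boundary cases. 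Once these automorphisms are produced, vertex-transitivity of $G$ and the telescoping argument of Theorem 2.3 give $B'(e_1) = B'(e_2) = B'(e_3)$ by summing over source vertices.

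The main obstacle is the BFS analysis: the dead-edge set $E_s$ is much more delicate for a 3-circulant than the six-edge set isolated in Lemma \ref{sixedges}, because shortest paths now come in many more types (mixing all three generators), so the ``shell by shell'' picture of Figure 3 must be redrawn for each family and the precise placement of the antipodal dead edges depends on the joint arithmetic of $k$, $a$, $b$, $c$. I would expect Classes 3, 4, 7, 8 (all with modulus $32 + 8(n-1)$ or $20 + 8(n-1)$) to admit essentially a common BFS analysis differing only in offsets, while Classes 5 and 6 (modulus $20 + 16(n-1)$, $28 + 16(n-1)$) and especially Class 9 (modulus $49 + 14(n-1)$, the only odd-modulus family) will require genuinely separate treatments. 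Producing a closed-form $E_s$ and a closed-form $\phi_{ij}$ for each of these three groupings, and verifying in each case that $G - E_s$ retains the two automorphisms needed to fuse the three chord orbits, is where the substance of the conjecture lies.
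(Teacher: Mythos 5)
This statement is posed in the paper as a \emph{conjecture} and explicitly listed as an open problem; the paper supplies no proof, so there is nothing to compare your argument against. What you have written is a research plan rather than a proof, and by your own admission the substance of the conjecture lies in steps you have not carried out. Concretely, two gaps remain. First, for non-edge-transitivity you lean on ``the hexavalent analogue of the Wilson--Poto\v{c}nik classification,'' but no such analogue is established in the paper (and the isomorphism criterion you cite, Lemma \ref{NP}, is due to Nicoloso and Pietropaoli and applies only to 2-circulants $C_k(a,b)$, not to 3-circulants); the fallback multiplier argument would itself need a proof that every automorphism of these particular circulants is affine, which is false for circulants in general. Second, and more seriously, the uniformity step rests on the existence, for each source $s$ and each pair of chord lengths, of an automorphism of $G-E_s$ exchanging the corresponding chords at $s$. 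For Classes 1 and 2 the existence of such an automorphism is the delicate, case-by-case content of Lemmas \ref{25} and 2.7, and it is something of a miracle that deleting six edges suffices to fuse the two orbits. You give no evidence that $G-E_s$ for a 6-regular 3-circulant admits the \emph{two independent} edge-fusing automorphisms you need, nor a candidate closed form for $E_s$; if no such automorphisms exist, the entire method collapses and one would have to fall back on explicit shortest-path counting.

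In short: the strategy is the natural generalization of the paper's machinery and is a reasonable line of attack, but every load-bearing step (the BFS dead-edge identification, the construction and neighborhood-preservation verification of the maps $\phi_{ij}$, and the non-edge-transitivity argument) is deferred. As it stands this is a proposal for how one might try to resolve the conjecture, not a proof of it.
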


{\large Acknowledgements\bigskip}

The authors are very grateful to Stanis\l {}aw Radziszowski for useful
discussion and for processing graph data on edge transitive graphs up to 20
vertices. Research was supported by National Science Foundation Research
Experiences for Undergraduates Site Award \#1659075.

\end{document}